\newtheorem{thm}{Theorem}[section]
\newtheorem{lem}[thm]{Lemma}
\newtheorem{cor}[thm]{Corollary}
\newtheorem{prop}[thm]{Proposition}
\theoremstyle{definition}
\newtheorem{dfn}[thm]{Definition}
\newtheorem{example}[thm]{Example}
\newtheorem{rmk}[thm]{Remark}
\numberwithin{equation}{section}
\title[Rescaled extrapolation for vector-valued functions]{Rescaled extrapolation for vector-valued functions}
\author{Alex Amenta, Emiel Lorist, and Mark Veraar}
\address{Delft Institute of Applied Mathematics \\ Delft University of Technology \\ P.O. Box 5031\\ 2600 GA Delft \\The Netherlands}
\thanks{The authors are supported by the VIDI subsidy 639.032.427 of the Netherlands Organisation for Scientific Research (NWO)}
\email{amenta@fastmail.fm}
\email{E.Lorist@tudelft.nl}
\email{M.C.Veraar@tudelft.nl}
\newcommand{\QQ}{\mathbb{Q}}
\newcommand{\CC}{\mathbb{C}}
\newcommand{\EE}{\mathbb{E}}
\newcommand{\NN}{\mathbb{N}}
\newcommand{\RR}{\mathbb{R}}
\newcommand{\TT}{\mathbb{T}}
\newcommand{\ZZ}{\mathbb{Z}}
\newcommand{\FF}{\mathscr{F}}
\newcommand{\Sch}{\mathcal{S}}
\newcommand{\calL}{\mc{L}}
\newcommand{\UMD}{\operatorname{UMD}}
\newcommand{\LPR}{\operatorname{LPR}}
\newcommand{\ind}{{{{\bf 1}}}}
\newcommand{\loc}{\operatorname{loc}}
\newcommand{\mb}{\mathbf}
\newcommand{\mc}{\mathcal}
\newcommand{\sgn}{\operatorname{sgn}}
\newcommand{\map}[3]{#1 \colon #2 \rightarrow #3}
\newcommand{\dd}{\hspace{2pt}\mathrm{d}}
\newcommand{\ee}{\mathrm{e}}
\newcommand{\wh}{\widehat}
\newcommand{\wt}{\widetilde}
\newcommand{\inc}{\mb{\phi}}   
\DeclarePairedDelimiter\abs{\lvert}{\rvert}
\DeclarePairedDelimiter\cbrace\{\}
\DeclarePairedDelimiter\ha()
\DeclarePairedDelimiter\nrm{\lVert}{\rVert}
\newcommand{\nrms}[1]{\Bigl\|#1\Bigr\|}
\newcommand{\abss}[1]{\Bigl|#1\Bigr|}
\newcommand{\has}[1]{\Bigl(#1\Bigr)}
\newcommand{\cbraces}[1]{\Bigl\{#1\Bigr\}}
\begin{document}

\begin{abstract}
  We extend Rubio de Francia's extrapolation theorem for functions valued in $\UMD$ Banach function spaces, leading to short proofs of some new and known results.
  In particular we prove Littlewood--Paley--Rubio de Francia-type estimates and boundedness of variational Carleson operators for Banach function spaces with $\UMD$ concavifications.
\end{abstract}

\keywords{extrapolation, Muckenhoupt weights, UMD, Banach function spaces, $p$-convexity, Hardy-Littlewood maximal function, Fourier multipliers, variational Carleson operator, Littlewood--Paley--Rubio de Francia inequalities}

\subjclass[2010]{Primary: 42B25; Secondary: 42A20, 42B15, 42B20, 46E30}


\maketitle

\section{Introduction}

The last few decades have seen many advances in the harmonic analysis of functions valued in a Banach space $X$.
Two cornerstone results are the boundedness of the lattice maximal function \cite{Bour:BCP, jR86}, and the equivalence of the $X$-valued Littlewood--Paley theorem and the $\UMD$ property for $X$, see \cite{Bou86}.
The Littlewood--Paley theorem is used to obtain extensions of the Marcinkiewicz multiplier theorem in \cite{Bou86} for scalar multipliers, and in \cite{We01} for operator-valued multipliers.
For an overview of these topics we refer to \cite{HNVW16}, and for useful applications to parabolic PDEs see for example \cite{DHP03,KW04,PruSim}.
Recent work on vector-valued harmonic analysis in UMD Banach function spaces includes \cite{BCCFR12, DK17, Fack14, sF16, HoMa16,  HMP08, HytVah, PSX12, Tag09, Xu15}.

In this paper we prove the following `rescaled' extrapolation theorem for $X$-valued functions (stated more precisely as Corollary \ref{cor:op-extrap}).
Here $\Sigma(\RR^d)$ denotes the simple functions $\RR^d \to \CC$, and $L^0(\RR^d)$ denotes the measurable functions $\RR^d \to \CC$ modulo almost everywhere equality.

\begin{thm}\label{thm:op-extrap-intro}
  Fix $p_0 \in (0,\infty)$.
  Suppose $T \colon \Sigma(\RR^d)\to L^0(\RR^d)$ satisfies
  \begin{equation}\label{eq:assTthmintro}
    |T(f) - T(g)|\leq |T(f-g)|, \qquad f,g\in \Sigma(\RR^d),
  \end{equation}
  and assume $T$ extends to a bounded operator on $L^p(\RR^d,w)$ for all $p > p_0$ and all Muckenhoupt weights $w \in A_{p/p_0}$. Let $X$ be a Banach function space and assume that for all $f \in \Sigma(\RR^d;X)$ the function $\map{\widetilde{T}f}{\RR^d}{X}$, defined by
  \begin{equation*}
    \widetilde{T}f(x,\omega):= \bigl(Tf(\cdot,\omega)\bigr)(x), \qquad x \in \RR^d, \quad \omega \in \Omega,
  \end{equation*}
  is well-defined and strongly measurable.
  If $X$ is $p_0$-convex and $X^{p_0}$ has the $\UMD$ property, then $\widetilde{T}$ extends to a bounded operator on
   $L^p(\RR^d,w;X)$ for all $p \in (p_0,\infty)$ and $w \in A_{p/p_0}$.
\end{thm}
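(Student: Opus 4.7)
The natural strategy is to imitate the classical Rubio de Francia extrapolation scheme, using the $p_0$-concavification $Y := X^{p_0}$ (a $\UMD$ Banach function space by hypothesis) as a rescaled replacement for the scalar setting. The starting observation is the isometric identification
\[
\|f\|_{L^p(\RR^d,w;X)}^{p_0} = \bigl\| |f|^{p_0} \bigr\|_{L^{p/p_0}(\RR^d,w;Y)},
\]
which lets one transfer the $X$-valued problem at exponent $p > p_0$ to a $Y$-valued problem at exponent $q := p/p_0 > 1$ with weight $w \in A_q$, i.e.\ the usual unrescaled Muckenhoupt class.

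The core technical ingredient in this rescaled regime is the boundedness of the lattice Hardy--Littlewood maximal operator $\widetilde{M}$ on $L^q(\RR^d, w; Y)$ for all $q > 1$ and $w \in A_q$, which follows from $Y \in \UMD$ via the Bourgain--Rubio de Francia results recalled in the introduction. Using this, I would run a Rubio de Francia iteration on $Y$-valued (or $Y^*$-valued) functions, of the form $R h = \sum_{k \geq 0} \widetilde{M}^k h / (2\|\widetilde{M}\|)^k$, to produce majorants whose slicewise values in $\omega \in \Omega$ belong to the scalar $A_1$ class (hence $A_{p/p_0}$) with uniformly controlled characteristic. Combined with the scalar weighted boundedness of $T$ applied slicewise in $\omega$ and a H\"older/duality argument between $L^q(w;Y)$ and $L^{q'}(w^{1-q'};Y^*)$, this should yield $\|\widetilde T f\|_{L^p(w;X)} \leq C \|f\|_{L^p(w;X)}$ for all $f \in \Sigma(\RR^d;X)$.

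Once this estimate holds on simple functions, the hypothesis \eqref{eq:assTthmintro} transfers pointwise to $\widetilde{T}$ as $|\widetilde{T} f - \widetilde{T} g| \leq |\widetilde{T}(f - g)|$, so a Cauchy-sequence argument extends $\widetilde{T}$ uniquely to a bounded operator on all of $L^p(\RR^d,w;X)$. The main obstacle lies in the compatibility between the nonlinear rescaling $f \mapsto |f|^{p_0}$, which governs the identification of norms, and the linear Rubio de Francia algorithm on $Y$: one must arrange that the $A_1$ majorants produced by the iteration are precisely those to which the scalar hypothesis on $T$ can be applied slicewise, and that the resulting $\omega$-by-$\omega$ estimates can be reassembled into a genuine $L^{p/p_0}(w;Y)$ bound. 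Verifying strong measurability of $\widetilde{T}f$ after extension by limits, and controlling the constants uniformly in $\omega$, are further technical points requiring care.
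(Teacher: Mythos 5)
Your high-level architecture is correct and matches the paper: rescale by $p_0$, use UMD of $X^{p_0}$ to get the lattice maximal operator bound, produce majorant weights that allow the scalar hypothesis to be applied slicewise in $\omega$, and then extend from simple functions via \eqref{eq:assTthmintro} and density (the paper packages this last step as Lemma \ref{lem:densityextension}). The paper also first proves a pairwise extrapolation statement for families $\mc{F} \subset L^0_+(\RR^d;X)^2$ (Theorem \ref{thm:pair-extrap-p}) and then specialises to $\mc{F}_T = \{(|f|, |\widetilde{T}f|)\}$ in Corollary \ref{cor:op-extrap}; your direct approach through the operator is equivalent and essentially cosmetic.

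There are, however, two substantive issues with your sketch of the middle step. First, the mechanism for building majorants is different. The paper does \emph{not} use the explicit Rubio de Francia iteration $Rh = \sum_k \widetilde{M}^k h / (2\|\widetilde{M}\|)^k$; it uses a minimax/Hahn--Banach type argument (Lemma \ref{lem:RdF1}, adapted from \cite[Lemma 1, p.~217]{jR86}) which takes as input the $\ell^q$-valued lattice maximal bound on $L^p(w;X)$. That bound follows from $X(\ell^q) \in \UMD$, which holds for \emph{every} $q \in (1,\infty)$ whenever $X \in \UMD$. Your explicit iteration, by contrast, would need $\widetilde{M}$ to act boundedly on the space in which the auxiliary function $u$ lives, namely $L^{(p/q)'}(w;(X^q)^*)$, and hence would require $(X^q)^* \in \UMD$, i.e.\ $X^q \in \UMD$. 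This is a genuinely stronger hypothesis than $X(\ell^q) \in \UMD$ and is only available for $q$ close to $1$; the minimax route is more robust.

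Second, and more seriously, your claim that the majorant produced has slicewise $A_1$ values ``hence $A_{p/p_0}$'' misses where the given weight $w$ enters. What is actually needed to apply the scalar hypothesis slicewise is that the \emph{product} $v(\cdot,\omega)\,w$ lies in $A_q$ with $\mu$-a.e.\ uniformly controlled characteristic; this is precisely what Lemma \ref{lem:RdF-algo} delivers (extracted from \eqref{eqn:v-extraction} via Proposition \ref{prop:muckenhoupt}\eqref{it:mw4}). If you run the iteration with the unweighted lattice maximal operator, you get $v(\cdot,\omega) \in A_1$, but $(A_1\text{-weight}) \cdot w$ is not in $A_q$ in general, so you cannot invoke the hypothesis with that weight. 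To make the iteration approach work one would need to build $w$ into the iteration (e.g.\ iterating a weighted maximal operator of the form $h \mapsto M(hw)/w$, or applying Jones factorization), which is considerably more delicate and which your sketch does not address. As written, the step ``applied slicewise in $\omega$'' has a gap: the weight at which you have an $A_1$ condition is not the weight appearing in the norm you are trying to estimate.
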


The assumption \eqref{eq:assTthmintro} holds in particular if $T$ is a linear operator, or if $T$ is a sublinear operator such that $Tf \geq 0$ for all $f \in \Sigma(\RR^d)$. In applications it is usually easy to check that $\widetilde{T}$ is well-defined and strongly measurable; see for example the operators in Sections \ref{sec:vcarl} and \ref{sec:LPR}. If $T$ is linear, then the extension coincides with the standard tensor extension, which is automatically well-defined and strongly measurable.

For $p_0=1$, and with $\RR^d$ replaced by the torus $\TT$, this result is proved in \cite[Theorem 5]{jR86}.
The main ingredient in the proof is the boundedness of the lattice maximal operator (see Theorem \ref{thm:maximalfunctionweighted}).
In fact, we deduce Theorem \ref{thm:op-extrap-intro} from a more general extrapolation theorem for pairs of functions (Theorem \ref{thm:pair-extrap-p}).
Further details may be found in Section \ref{sec:extrapolation}.

We use Theorem \ref{thm:op-extrap-intro} to prove two important results: vector-valued Littlewood--Paley--Rubio de Francia-type estimates (Section \ref{sec:LPR}), and boundedness of vector-valued variational Carleson operators (Section \ref{sec:vcarl}).
We also establish the boundedness of some scalar-valued Fourier multipliers on vector-valued functions (Section \ref{sec:applications}); we will obtain deeper operator-valued multiplier results from vector-valued Littlewood--Paley--Rubio de Francia-type estimates in \cite{ALV2}.

Our main motivation for this paper are the vector-valued Littlewood--Paley--Rubio de Francia-type estimates, which we briefly explain.
For an interval $I \subset \RR$, let $S_I$ denote the Fourier projection onto $I$, defined by $S_I f := \FF^{-1} (\mathbf{1}_I \hat{f} )$ for Schwartz functions $f$ on the real line.
For every collection $\mc{I}$ of pairwise disjoint intervals and every $q \in (0,\infty]$ we consider the operator
\begin{equation*}
  \mc{S}_{\mc{I},q}(f) := \big( \sum_{I \in \mc{I}} |S_I f|^q \big)^{1/q},
\end{equation*}
interpreted as a supremum when $q = \infty$.
If $\mc{I}$ is a dyadic decomposition of $\RR$, then the classical Littlewood--Paley inequality states that $\|\mc{S}_{\mc{I},2} f\|_{L^p} \eqsim \|f\|_{L^p}$ for $p \in (1,\infty)$.
In \cite{jR85} Rubio de Francia proves the $L^p$-boundedness of $\mc{S}_{\mc{I},q}$ when $\mc{I}$ is an \emph{arbitrary} collection of disjoint intervals, $q \in [2,\infty]$, and $p \in (q^\prime,\infty)$; this result (particularly the $q=2$ case) is now known as the Littlewood--Paley--Rubio de Francia theorem.

The definition of $S_I$ extends directly to the vector-valued setting.
Vector-valued extensions of the Littlewood--Paley--Rubio de Francia theorem for the case $q=2$ case are studied in \cite{BGT03,GilTor04,HP06,HTY09,PSX12} via a reformulation in terms of random sums,
\begin{equation*}
	\EE \nrms{ \sum_{I \in \mc{I}} \varepsilon_I S_I f }_{L^p(\RR;X)} \lesssim \nrm{f}_{L^p(\RR;X)},
\end{equation*}
where $(\varepsilon_I)_{I \in \mc{I}}$ is a sequence of independent Rademacher variables and $\EE$ denotes the expectation.
If this estimate holds then we say that $X$ has the $\LPR_{p,2}$ property, or in short, that $X$ is $\LPR_{p,2}$.
When $X$ is a $\UMD$ Banach function space, this is equivalent to the boundedness of $\mc{S}_{\mc{I},2}$ on $L^p(\RR;X)$.
However, when $q \neq 2$ no analogue of the boundedness of $\mc{S}_{\mc{I},q}$ for general Banach spaces is known.

The $\LPR_{p,2}$ property is quite mysterious.
In \cite[Theorem 1.2]{HTY09} it was shown that if a Banach space $X$ is $\LPR_{p,2}$ for some $p \geq 2$, then $X$ is $\UMD$ and has type $2$.
However, the converse is only known to hold when the collection $\mc{I}$ consists of intervals of equal length.
The most general sufficient condition currently known is in \cite[Theorem 3]{PSX12}: if $X$ is a $2$-convex Banach lattice and the $2$-concavification $X^2$ is $\UMD$, then $X$ is $\LPR_{p,2}$ for all $p > 2$.
This result is proved by an extension of Rubio de Francia's argument for the scalar-valued case.
Every Banach space $X$ that is known to have the $\LPR_{p,2}$ property is either of this form, or is isomorphic to a Hilbert space (and hence is $\LPR_{p,2}$ for all $p \geq 2$, by Rubio de Francia's original proof).

We prove the following theorem (a more precise version of which appears as Theorem \ref{thm:LPR-main}).

\begin{thm}\label{thm:lpr-main-intro}
  Let $q \in [2,\infty)$,
  and suppose $X$ is a $q$-convex Banach function space whose $q'$-concavification $X^{q'}$ is $\UMD$.
  Then there exists a nondecreasing function $\map{\inc_{X,p,q}}{[1,\infty)}{[1,\infty)}$ such that
  \begin{equation*}
  	\|\mc{S}_{\mc{I},q} f\|_{L^p(w;X)} \leq \inc_{X,p,q}([w]_{A_{p/q'}}) \nrm{f}_{L^p(w;X)}
	\end{equation*}
 for all $p\in (q',\infty)$, all Muckenhoupt weights $w \in A_{p/q'}$, and all $f \in L^p(w;X)$.
\end{thm}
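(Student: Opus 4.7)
The plan is to apply the rescaled extrapolation theorem (Theorem~\ref{thm:op-extrap-intro}, restated as Corollary~\ref{cor:op-extrap}) with the scalar sublinear operator $T:=\mc{S}_{\mc{I},q}$ and exponent $p_0:=q'$. The subtraction condition \eqref{eq:assTthmintro} follows from Minkowski's inequality in $\ell^q$ combined with the linearity of each Fourier projection $S_I$:
\begin{equation*}
\abs{\mc{S}_{\mc{I},q}(f)(x) - \mc{S}_{\mc{I},q}(g)(x)} \leq \Bigl(\sum_{I \in \mc{I}} \abs{S_I(f-g)(x)}^q\Bigr)^{1/q} = \mc{S}_{\mc{I},q}(f-g)(x).
\end{equation*}
For an $X$-valued simple function $f$, each fibre $f(\cdot,\omega)$ is a scalar simple function, so $\widetilde{T}f$ is given fibrewise by the scalar operator, and strong measurability is routine (first for finite truncations of $\mc{I}$).

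The main analytic input, and the main obstacle, is the scalar weighted Littlewood--Paley--Rubio de Francia inequality: $\mc{S}_{\mc{I},q}$ is bounded on $L^p(w)$ for all $p \in (q',\infty)$ and all $w \in A_{p/q'}$, with constant depending monotonically on $[w]_{A_{p/q'}}$. For $q=2$ this is classical. For general $q \in [2,\infty)$ it can be derived by combining Rubio de Francia's original argument for the scalar LPR inequality \cite{jR85} with scalar (Rubio de Francia) extrapolation applied at the endpoint $p_0=q'$. This step carries essentially all of the real-variable harmonic analysis.

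Granted the scalar weighted bound, the hypotheses of Theorem~\ref{thm:op-extrap-intro} are satisfied: since $q' \leq 2 \leq q$, the assumed $q$-convexity of $X$ implies $q'$-convexity, and $X^{q'}$ is $\UMD$ by hypothesis. The theorem then yields boundedness of $\widetilde{T}=\mc{S}_{\mc{I},q}$ on $L^p(w;X)$ for all $p \in (q',\infty)$ and $w \in A_{p/q'}$, with constant nondecreasing in $[w]_{A_{p/q'}}$. To pass from finite truncations $\mc{I}_0 \subset \mc{I}$ to arbitrary $\mc{I}$, apply the preceding steps to each $\mc{I}_0$ with bounds independent of $\mc{I}_0$, then use monotone convergence of the $\ell^q$-sum in the Banach function space $X$ (via its lattice structure and $q$-convexity) to obtain the claimed estimate for the full collection $\mc{I}$.
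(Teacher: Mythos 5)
Your approach matches the paper's proof of Theorem~\ref{thm:LPR-main} exactly: apply Corollary~\ref{cor:op-extrap} with $T = \mc{S}_{\mc{I},q}$ and $p_0 = q'$, using that $\mc{S}_{\mc{I},q}$ is sublinear and nonnegative (so \eqref{eq:assTcor} is automatic) and that $X^{q'}\in\UMD$. The one point to fix is your account of the scalar input for $q>2$: it is \emph{not} obtainable from Rubio de Francia's $q=2$ argument plus extrapolation. To extrapolate at $p_0=q'$ you need the estimate for some fixed $p$ and \emph{all} $w \in A_{p/q'}$, a strictly larger weight class than the $A_{p/2}$ that falls out of the pointwise comparison $\mc{S}_{\mc{I},q} \le \mc{S}_{\mc{I},2}$; Rubio de Francia's proof does not supply this. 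The required scalar weighted estimate for $q>2$ is Kr\'ol's \cite[Theorem~B]{sK14}, which is what the paper cites, and which already carries the monotone dependence on $[w]_{A_{p/q'}}$. For $q=2$ the paper uses \cite[Theorem~6.1]{jR85} together with Theorem~\ref{thm:increasingconstant} to secure monotonicity. With that correction your proof is the paper's proof.
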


We deduce this result, which includes \cite[Theorem 3]{PSX12} as a special case, directly from the scalar case $X = \CC$ via Theorem \ref{thm:op-extrap-intro}.
See Section \ref{sec:LPR} for further details.

\subsection*{Notation}

If $\Omega$ is a measure space (we omit reference to the measure unless it is needed) and $X$ is a Banach space, we let $\Sigma(\Omega;X)$ denote the vector space of simple functions $\Omega \to X$, and $L^0(\Omega;X)$ denote the vector space of strongly measurable functions modulo almost-everywhere equality.
When $X = \CC$ we denote these sets by $\Sigma(\Omega)$ and $L^0(\Omega)$.
When $X$ is a Banach function space we let $L^0_+(\Omega;X)$ denote the space of (almost everywhere) non-negative functions in $L^0(\Omega;X)$.
For Banach spaces $X$ and $Y$, $\mc{B}(X,Y)$ denotes the bounded operators and $\mc{L}(X, Y)$ the bounded linear operators from $X$ into $Y$.

Throughout the paper we write $\inc_{a,b,\ldots}$ to denote a non-decreasing function $[1,\infty) \to [1,\infty)$ which depends only on the parameters $a,b,\ldots$, and which may change from line to line.
Non-decreasing dependence on the Muckenhoupt characteristic of weights is needed for extrapolation theorems.
We do not obtain sharp dependence on Muckenhoupt characteristics in our results, but we need to be careful in tracking monotonicity of estimates in these characteristics.
In Appendix \ref{sec:weight dependence} we show that monotone dependence on the Muckenhoupt characteristic can be deduced from a more general estimate in terms of the characteristic.

Occasionally we will work with $\RR^d$ for a fixed dimension $d \geq 1$.
Implicit constants in estimates will depend on $d$, but we will not state this.

\subsection*{Acknowledgements}
We thank Gennady Uraltsev for bringing the results of \cite{dPDU16} and \cite{OSTTW12} to our attention, Sebastian Kr\'ol for interesting discussions on extrapolation and the anonymous referee for their helpful comments.

\section{Preliminaries\label{sec:prel}}

\subsection{Banach function spaces\label{subs:Bfs}}

\begin{dfn}\label{def:bfs}
  Let $\Omega$ be a measure space.
  A subspace $X$ of $L^0(\Omega)$ equipped with a norm $\nrm{\cdot}_X$ is called a {\em Banach function space} (over $\Omega$) if it satisfies the following properties:
  \begin{enumerate}[(i)]
    \item If $x \in L^0(\Omega), y \in X$, and $\abs{x} \leq \abs{y}$, then $x \in X$ and $\nrm{x}_X \leq \nrm{y}_X$.
    \item There exists $\zeta \in X$ with $\zeta>0$.
    \item If $0 \leq x_n \uparrow x$ with $(x_n)_{n=1}^\infty$ a sequence in $X$, $x \in L^0(\Omega)$, and $\sup_{n \in \NN} \nrm{x_n}_X < \infty$, then $x \in X$ and $\nrm{x}_X = \sup_{n \in \NN}\nrm{x_n}_X$.
  \end{enumerate}
  A Banach function space $X$ is {\em order continuous} if for any $0 \leq x_n \uparrow x$ with $(x_n)_{n=1}^\infty$ a sequence in $X$ and $x \in X$, we have $\nrm{x - x_n}_X \to 0$.
\end{dfn}

\begin{dfn}
  Let $X$ be a Banach function space and $p \in [1,\infty]$.
  We say that $X$ is {\em $p$-convex} if
  \begin{equation*}
    \nrms{\has{\sum_{k=1}^n \abs{x_k}^p}^\frac{1}{p}}_X \leq \has{\sum_{k=1}^n\nrm{x_k}_X^p}^\frac{1}{p}
  \end{equation*}
  for all $x_1,\cdots,x_n \in X$, with the usual modification when $p = \infty$.
  We say that $X$ is {\em $p$-concave} if the reverse estimate holds.
\end{dfn}

Every Banach function space is $1$-convex and $\infty$-concave, and furthermore if a Banach function space is $p$-convex and $q$-concave then $p \leq q$.
As a simple example, we note that $L^r$ is $p$-convex for all $p \in [1,r]$ and $q$-concave for all $q \in [r,\infty]$.
The definitions of $p$-convexity and $p$-concavity usually include an implicit constant depending on $p$ and $X$, but if such an estimate holds then $X$ may be equivalently renormed so that these constants are equal to $1$ (see \cite[Theorem 1.d.8]{LT79}).
Since our results are stable under equivalence of norms, we may consider the stronger definition above without loss of generality.

The following elementary properties are proved in \cite[Section 1.d]{LT79}.

\begin{prop}
\label{prop:convexconcaveinterval}
  Let $X$ be a Banach function space and $p_0 \in [1,\infty]$.
  \begin{enumerate}[(i)]
    \item If $X$ is $p_0$-convex, then $X$ is $p$-convex for all $p \in [1,p_0]$.
    \item If $X$ is $p_0$-concave, then $X$ is $p$-concave for all $p \in [p_0,\infty]$.
    \item $X$ is $p_0$-convex if and only if $X^*$ is $p_0^\prime$-concave.
  \end{enumerate}
\end{prop}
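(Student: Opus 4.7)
My plan is to prove parts (i) and (ii) by a common rescaling trick and part (iii) by Köthe duality.

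For (i) and (ii), the pivotal analytic fact is that the weighted power mean $r \mapsto (\sum_k \lambda_k a_k^r)^{1/r}$ is nondecreasing in $r \in (0,\infty)$ for any probability vector $(\lambda_k)$ and non-negative reals $(a_k)$. Given $x_1,\dots,x_n \in X$, I would normalize by homogeneity to $\sum_k \|x_k\|_X^p = 1$, set $\lambda_k := \|x_k\|_X^p$, and introduce the rescaled vectors $y_k := \|x_k\|_X^{(p-p_0)/p_0}\, x_k$ (taking $y_k := 0$ whenever $x_k = 0$), so that $\|y_k\|_X^{p_0} = \lambda_k$. Writing $a_k := |x_k|/\|x_k\|_X$, one checks $\sum_k |x_k|^p = \sum_k \lambda_k a_k^p$ and $\sum_k |y_k|^{p_0} = \sum_k \lambda_k a_k^{p_0}$ pointwise on the underlying measure space, so the power-mean inequality lifted to $X$ via the lattice structure gives
\[
  \Bigl(\sum_k |x_k|^p\Bigr)^{1/p} \leq \Bigl(\sum_k |y_k|^{p_0}\Bigr)^{1/p_0}\quad\text{when } p \leq p_0,
\]
and the reverse inequality when $p \geq p_0$. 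Feeding the first into the $p_0$-convexity hypothesis applied to $(y_k)$ yields (i); feeding the second into the $p_0$-concavity hypothesis yields (ii); in both cases one uses $(\sum_k \|y_k\|_X^{p_0})^{1/p_0} = (\sum_k \lambda_k)^{1/p_0} = 1$.

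For (iii), I would use the scalar $\ell^{p_0}$--$\ell^{p_0'}$ duality applied sequence-wise together with the Köthe dual norm $\|y\|_{X^*} = \sup\{\int |xy|\,d\mu : \|x\|_X \leq 1\}$. Specifically,
\[
  \Bigl(\sum_k \|y_k\|_{X^*}^{p_0'}\Bigr)^{1/p_0'} = \sup\Bigl\{\sum_k \int |x_k y_k|\,d\mu : \Bigl(\sum_k \|x_k\|_X^{p_0}\Bigr)^{1/p_0} \leq 1\Bigr\},
\]
and the pointwise Hölder inequality $\sum_k |x_k y_k| \leq (\sum_k |x_k|^{p_0})^{1/p_0}(\sum_k |y_k|^{p_0'})^{1/p_0'}$ combined with the dual characterization of $\|\cdot\|_{X^*}$ converts the $p_0$-convexity inequality on $X$ directly into the $p_0'$-concavity inequality on $X^*$. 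The reverse implication follows by applying the forward direction to the pair $(X^*, X^{**})$ and using the isometric embedding $X \hookrightarrow X^{**}$ guaranteed by the Fatou property built into Definition \ref{def:bfs}(iii).

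The most delicate step is the duality in (iii): its success hinges on the Köthe dual $X^*$ being a norming subspace of $X$, which is precisely what the Fatou property ensures, so both implications close up without any extra hypotheses.
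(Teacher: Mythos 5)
The paper offers no proof of Proposition \ref{prop:convexconcaveinterval}---it simply cites \cite[Section 1.d]{LT79}---so yours is the only argument to assess. Parts (i) and (ii) are correct: the normalisation $\sum_k\|x_k\|_X^p=1$, the rescaling $y_k=\|x_k\|_X^{(p-p_0)/p_0}x_k$, the identities $\sum_k|x_k|^p=\sum_k\lambda_k a_k^p$ and $\sum_k|y_k|^{p_0}=\sum_k\lambda_k a_k^{p_0}$, and the pointwise monotonicity of power means, lifted via the ideal property of Definition \ref{def:bfs}(i), give the required norm inequalities; the endpoint exponents $1$ and $\infty$ are trivial or need only the obvious limiting interpretation. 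The forward implication of (iii) is also correct.

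The reverse implication of (iii) contains a genuine gap. What you proved is ``$X$ is $p_0$-convex $\Rightarrow X^*$ is $p_0'$-concave.'' Applying this to $X^*$ produces statements of the form ``$X^*$ is $q$-convex $\Rightarrow X^{**}$ is $q'$-concave,'' none of which has ``$X^*$ is $p_0'$-concave'' as hypothesis, so the converse cannot be extracted formally from the forward direction together with the embedding $X\hookrightarrow X^{**}$. The missing ingredient is the complementary implication ``concave $\Rightarrow$ convex dual,'' whose proof requires more than the pointwise H\"older inequality: one must use the H\"older \emph{extremiser}. Concretely, to show that $p_0'$-concavity of the K\"othe dual implies $p_0$-convexity of $X$, fix $x_1,\dots,x_n\in X$, set $z:=(\sum_k|x_k|^{p_0})^{1/p_0}$, and for $\|y\|_{X^*}\le 1$ define $y_k:=|y|\,|x_k|^{p_0-1}z^{1-p_0}$ (with $y_k:=0$ where $z=0$). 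Then pointwise $\sum_k|x_ky_k|=|y|\,z$ and $(\sum_k|y_k|^{p_0'})^{1/p_0'}=|y|$, so
\[
\int|y|\,z\dd\mu=\sum_k\int|x_ky_k|\dd\mu\le\Bigl(\sum_k\|x_k\|_X^{p_0}\Bigr)^{1/p_0}\Bigl(\sum_k\|y_k\|_{X^*}^{p_0'}\Bigr)^{1/p_0'}\le\Bigl(\sum_k\|x_k\|_X^{p_0}\Bigr)^{1/p_0},
\]
using $p_0'$-concavity of $X^*$ in the final step. Taking the supremum over $\|y\|_{X^*}\le1$ and invoking the Fatou property of Definition \ref{def:bfs}(iii), which makes the K\"othe dual norming (so $\|z\|_X=\|z\|_{X^{**}}$), gives the required $p_0$-convexity. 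You should replace ``apply the forward direction to $(X^*,X^{**})$'' with this extremiser argument, or else prove the concave-to-convex-dual implication separately before invoking biduality.
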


Let $X$ be a Banach function space over a measure space $\Omega$, and let $s \in (0,\infty)$.
We define the {\em $s$-concavification} $X^s$ of $X$ by
\begin{equation}\label{eq:concavification}
  X^{s} := \cbrace*{x \in L^0(\Omega): \abs{x}^{1/s} \in X} = \cbrace*{\abs{x}^s\sgn(x): x \in X},
\end{equation}
where $\sgn$ is the complex signum function, endowed with the quasinorm
\begin{equation*}
  \nrm{x}_{X^s} := \big\| |x|^{1/s}\big\|_X^{s}.
\end{equation*}
By Proposition \ref{prop:convexconcaveinterval}, when $s > 1$, $X^s$ is a Banach space if and only if $X$ is $p$-convex for some $p \geq s$.
On the other hand, when $s \leq 1$, $X^s$ is always a Banach space.
As a key example, for $0 < r \leq p < \infty$ the $r$-concavification of $L^p$ is $(L^p)^r = L^{p/r}$.

The following simple density lemma will be applied several times.
It is not difficult---some may consider it obvious---but it should be emphasised.

\begin{lem}\label{lem:densityextension}
Assume $T \colon \Sigma(\RR^d)\to L^0(\RR^d)$ satisfies
\begin{equation}\label{eq:assTlem}
|T(f) - T(g)|\leq |T(f-g)|, \qquad f,g\in \Sigma(\RR^d).
\end{equation}
Let $X$ be a Banach function space over $(\Omega,\mu)$ and assume that for all $f \in \Sigma(\RR^d;X)$ the function $\map{\widetilde{T}f}{\RR^d}{X}$, defined by
  \begin{equation*}
    \widetilde{T}f(x,\omega):= \bigl(Tf(\cdot,\omega)\bigr)(x), \qquad x \in \RR^d, \quad \omega \in \Omega,
  \end{equation*}
  is well-defined and strongly measurable. Let $w \colon \RR^d\to (0,\infty)$ be a locally integrable function, and $p\in (0, \infty)$.
If there exists a constant $C\geq 0$ such that
\begin{equation*}
  \|\widetilde{T}(f)\|_{L^p(w;X)}\leq C\|f\|_{L^p(w;X)}, \qquad f\in \Sigma(\RR^d;X),
\end{equation*}
then $\widetilde{T}$ extends to a bounded operator on $L^p(w;X)$.
\end{lem}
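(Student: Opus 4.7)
The strategy is to replace the usual linear density argument by one that exploits the quasi-Lipschitz hypothesis \eqref{eq:assTlem} in place of linearity. Since $\Sigma(\RR^d;X)$ is norm-dense in $L^p(w;X)$ (every strongly measurable $X$-valued function is the pointwise limit of simple functions, and an application of dominated convergence with majorant $2\|f(\cdot)\|_X$ promotes this to $L^p(w;X)$-convergence), it suffices to show that $\widetilde T$ maps Cauchy sequences in $\Sigma(\RR^d;X)$ to Cauchy sequences in $L^p(w;X)$ and that the limit is independent of the approximating sequence.

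The key observation is that \eqref{eq:assTlem} lifts to an $X$-valued pointwise inequality. Fix $f,g\in\Sigma(\RR^d;X)$. For each $\omega\in\Omega$ the slice $f(\cdot,\omega)$ lies in $\Sigma(\RR^d)$, so by \eqref{eq:assTlem},
\begin{equation*}
  \abs{\widetilde T f(x,\omega) - \widetilde T g(x,\omega)} \leq \abs{\widetilde T(f-g)(x,\omega)}
\end{equation*}
for every $(x,\omega)$. By the lattice property of the Banach function space $X$, this yields $\nrm{\widetilde T f(x)-\widetilde T g(x)}_X \leq \nrm{\widetilde T(f-g)(x)}_X$ for a.e.\ $x\in\RR^d$. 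Raising to the $p$-th power, integrating against $w$, and applying the assumed bound to $f-g\in\Sigma(\RR^d;X)$ gives
\begin{equation*}
  \nrm{\widetilde T f - \widetilde T g}_{L^p(w;X)} \leq \nrm{\widetilde T(f-g)}_{L^p(w;X)} \leq C\nrm{f-g}_{L^p(w;X)},
\end{equation*}
which is precisely the quasi-Lipschitz property needed to run the extension argument.

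Given $f\in L^p(w;X)$, choose $f_n\in\Sigma(\RR^d;X)$ with $f_n\to f$ in $L^p(w;X)$. The above estimate shows $(\widetilde T f_n)$ is Cauchy, hence convergent in the complete (quasi-)Banach space $L^p(w;X)$; define the extension as its limit. Independence of the approximating sequence follows by interleaving two sequences into a single Cauchy sequence and applying the same estimate. Passing to the limit in the inequality $\nrm{\widetilde T f_n}_{L^p(w;X)} \leq C\nrm{f_n}_{L^p(w;X)}$ yields boundedness of the extension, completing the proof.

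The only mild obstacle is the case $p\in(0,1)$, where $\nrm{\cdot}_{L^p(w;X)}$ is only a quasi-norm, but this causes no difficulty: the argument is $p$-homogeneous and the relevant space is still a complete metric linear space under $d(f,g) = \nrm{f-g}_{L^p(w;X)}^{\min(1,p)}$, so the Cauchy-sequence completion argument applies verbatim.
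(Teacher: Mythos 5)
Your proof is correct and follows essentially the same route as the paper's: you lift the pointwise bound \eqref{eq:assTlem} to the $X$-valued operator $\widetilde T$ via the lattice property, deduce the quasi-Lipschitz estimate $\|\widetilde Tf-\widetilde Tg\|_{L^p(w;X)}\le C\|f-g\|_{L^p(w;X)}$, and then extend by density. The paper compresses the last step into a single sentence (``Lipschitz continuous, and thus uniquely extends by density''); your write-up simply spells out the Cauchy-sequence argument and the standard $p\in(0,1)$ caveat, all of which is implicit there.
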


Note that \eqref{eq:assTlem} holds for all linear operators $T \colon \Sigma(\RR^d)\to L^0(\RR^d)$ and for all positively-valued sublinear operators $T \colon \Sigma(\RR^d)\to L^0_+(\RR^d)$ (such as maximal functions or square functions).

\begin{proof}
  For all $f,g\in \Sigma(\RR^d;X)$ we have $|\widetilde{T}(f) - \widetilde{T}(g)|\leq |\widetilde{T}(f-g)|$ pointwise in $\Omega$, so it follows that
  \begin{equation*}
    \|\widetilde{T}(f) - \widetilde{T}(g)\|_{L^p(w;X)}\leq \|\widetilde{T}(f-g)\|_{L^p(w;X)}\leq C \|f-g\|_{L^p(w;X)}.
  \end{equation*}
Therefore $\widetilde{T}$ is Lipschitz continuous, and thus uniquely extends to a bounded operator on $L^p(w;X)$ by density of $\Sigma(\RR^d;X)$ in $L^p(w;X)$.
\end{proof}

\begin{rmk}
Although our results are stated in terms of Banach function spaces, many of them extend to spaces which are isomorphic to a closed subspace of a Banach function space, and by standard representation techniques many results extend to Banach lattices.
We refer to \cite{LT79, MeyNie} for details.
\end{rmk}

\subsection{Muckenhoupt weights\label{subs:muck}}
A \emph{weight} on $\RR^d$ is a nonnegative function $w \in L^1_{\loc}(\RR^d)$.
For $p \in [1,\infty)$ the space $L^p(w) = L^p(\RR^d,w)$ is the subspace of all $f \in L^0(\RR^d)$ such that
\begin{equation*}
  \nrm{f}_{L^p(w)} := \has{\int_{\RR^d}\abs{f(x)}^p w(x) \dd x}^{1/p}< \infty.
\end{equation*}
The \emph{Muckenhoupt $A_p$ class} is the set of all weights $w$ such that
  \begin{equation*}
    [w]_{A_{p}} :=\sup_{B} \frac{1}{\abs{B}} \int_B w(x) \dd x \cdot \has{\frac{1}{\abs{B}} \int_B w(x)^{-\frac{1}{p-1}}\dd x}^{p-1} < \infty,
  \end{equation*}
  where the supremum is taken over all balls $B \subset \RR^d$, and where the second factor is replaced by $\nrm{w^{-1}}_{L^\infty(B)}$ when $p=1$.
  We define $A_\infty = \bigcup_{p \geq 1} A_p$.
  When $p \in (1,\infty)$, a weight $w$ is in $A_p$ if and only if the Hardy-Littlewood maximal operator $M$ is bounded on $L^p(w)$; this operator is defined on $f \in L^1_{\loc}(\RR^d)$ by
\begin{equation}\label{eq:Maximalf}
  Mf(x) := \sup_{r>0}\frac{1}{\abs{B(x,r)}}\int_{B(x,r)} \abs{f(y)} \dd y, \qquad x \in \RR^d.
\end{equation}

Proofs of the following properties can be found in \cite[Chapter 9]{lG09}.

\begin{prop}\label{prop:muckenhoupt}\
\begin{enumerate}[(i)]
   \item \label{it:mw3} The $A_p$ classes are increasing in $p$, with $[w]_{A_q} \geq [w]_{A_p}$ when $1 \leq q \leq p$.
  \item \label{it:mw5} For all $w \in A_p$ with $p \in (1,\infty)$ there exists $\varepsilon>0$ such that $w \in A_{p-\varepsilon}$.
  \item \label{it:mw4} For all $p \in (1,\infty)$ and all weights $w$,
  \begin{equation*}
    \nrm{M}_{\mc{B}(L^p(w))} \lesssim [w]_{A_p}^{\frac{1}{p-1}} \lesssim \nrm{M}_{\mc{B}(L^p(w))}^{p^\prime}
  \end{equation*}
  with implicit constants independent of $w$.
\end{enumerate}
\end{prop}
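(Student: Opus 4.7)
The statement is a standard collection of facts about Muckenhoupt weights, and (as noted in the paper) the full argument can be pointed to \cite[Chapter 9]{lG09}; still, here is how I would organize a self-contained proof of the three parts.

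For part (i), fix $1 \leq q \leq p$ and a ball $B$. The first factor in the definition of $[w]_{A_p}$ coincides with that of $[w]_{A_q}$, so only the second factor needs to be compared. When $q = 1$ the inequality $\frac{1}{|B|}\int_B w^{-1/(p-1)} \leq \nrm{w^{-1}}_{L^\infty(B)}$ is immediate. For $q > 1$ one applies Jensen's inequality to the convex map $t \mapsto t^{(p-1)/(q-1)}$ (note $(p-1)/(q-1) \geq 1$) to get
\begin{equation*}
\has{\frac{1}{|B|}\int_B w^{-1/(p-1)}\dd x}^{p-1} \leq \has{\frac{1}{|B|}\int_B w^{-1/(q-1)}\dd x}^{q-1},
\end{equation*}
and multiplying by the first factor of $[w]_{A_p}$ yields $[w]_{A_p} \leq [w]_{A_q}$.

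For part (ii), the self-improvement is a consequence of the \emph{reverse Hölder inequality}: if $w \in A_p$, there exist $\delta > 0$ and $C > 0$ (both depending only on $p$, $d$, and $[w]_{A_p}$) such that
\begin{equation*}
\has{\frac{1}{|B|}\int_B w^{1+\delta}\dd x}^{1/(1+\delta)} \leq \frac{C}{|B|}\int_B w\dd x
\end{equation*}
for all balls $B$. The standard proof of the reverse Hölder inequality goes through a Calderón--Zygmund decomposition applied to $w$ at levels $\lambda > \fint_B w$ and a good-$\lambda$ style estimate. Applying the reverse Hölder inequality to the dual weight $\sigma := w^{-1/(p-1)}$, which itself lies in $A_{p'}$ with controlled characteristic, produces a $\delta > 0$ so that $\sigma^{1+\delta}$ is still locally integrable with the right quantitative bound; this is exactly what is needed to shift $p$ down slightly to $p - \varepsilon$ while keeping the $A_{p-\varepsilon}$ condition finite.

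For part (iii), the lower bound is the easy direction: test $M$ on $f = w^{-1/(p-1)}\mathbf{1}_B$ for a ball $B$. On $B$ one has $Mf \geq \frac{1}{|B|}\int_B w^{-1/(p-1)}\dd x$, so
\begin{equation*}
\nrm{Mf}_{L^p(w)}^p \geq \has{\frac{1}{|B|}\int_B w^{-1/(p-1)}\dd x}^{p}\int_B w\dd x,
\end{equation*}
while $\nrm{f}_{L^p(w)}^p = \int_B w^{-1/(p-1)}\dd x$. Taking the $p$-th root and a supremum over balls gives $\nrm{M}_{\mc{B}(L^p(w))}^{p'} \gtrsim [w]_{A_p}^{1/(p-1)}$, since $p'/p = 1/(p-1)$ after rearranging the exponents. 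The upper bound is Buckley's sharp estimate, which is the main technical obstacle here: it is proved by decomposing $M$ into dyadic maximal operators $M^{\mc{D}}$ over countably many shifted dyadic grids (so $M \lesssim \sum M^{\mc{D}}$), then controlling each $M^{\mc{D}}$ via a Carleson embedding argument in which principal cubes are selected so that the integral averages of $f$ grow geometrically. Bookkeeping of the resulting geometric series produces the exponent $1/(p-1)$. The hardest step by far is this sharp quantitative control of $M$; the monotonicity in (i) and the open property in (ii) are standard once the reverse Hölder inequality is in hand.
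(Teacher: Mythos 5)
The paper itself gives no proof of this proposition, only the citation to \cite[Chapter 9]{lG09}; your filled-in arguments are correct and match the standard textbook route. The Jensen's inequality derivation of (i), the reverse-H\"older self-improvement applied to the dual weight $\sigma = w^{-1/(p-1)} \in A_{p'}$ for (ii), and the test function $f = w^{-1/(p-1)}\ind_B$ for the easy direction of (iii) together with the attribution of the upper bound to Buckley are all accurate, so there is nothing to compare against here beyond noting that the proposal is sound.
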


These definitions could be made in terms of cubes with sides parallel to the coordinate axes instead of balls.
This results in equivalent definitions up to dimensional constants.
Moreover one could replace the measure on $\RR^d$ with a general doubling measure. For further details on Muckenhoupt weights see \cite{CMP11} and \cite[Chapter 9]{lG09}.

\subsection{\texorpdfstring{The $\UMD$ property}{The UMD property}\label{subs:UMD}}

A Banach space $X$ has the $\UMD$ property if and only if the Hilbert transform extends to a bounded operator on $L^p(\RR;X)$.
This is a major result of Burkholder \cite{Burk83} and Bourgain \cite{Bour83}, and it also makes for a convenient definition.
For a detailed account of the theory of $\UMD$ spaces we refer the reader to \cite{Burk01} and \cite{HNVW16}.
The ``classical'' reflexive spaces---$L^p$ spaces, Sobolev spaces, Triebel--Lizorkin and Besov spaces, Schatten clases, among others---have the $\UMD$ property.
However, the $\UMD$ property implies reflexivity, so $L^1$ and $L^\infty$ (in particular) are not $\UMD$.

The theory of $\UMD$ Banach function spaces is very rich, and we refer to \cite{jR86} for an overview.
A connection between the $\UMD$ property and convexity is given by the following result, which is proved by combining \cite[Proposition 4.2.19]{HNVW16}, \cite[Theorem 11.1.14]{AK06}, and \cite[Corollary 1.f.9]{LT79}

\begin{prop}\label{prop:UMDtype}
  Let $X$ be a $\UMD$ Banach function space. Then $X$ is $p$-convex and $q$-concave for some $1<p<q<\infty$.
\end{prop}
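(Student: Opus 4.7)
The plan is to route the argument through type and cotype, which serve as the standard intermediaries between the functional-analytic $\UMD$ property and the lattice-theoretic notions of convexity and concavity.

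First I would invoke the fact that every $\UMD$ Banach space has nontrivial type and finite cotype; this is exactly the content of \cite[Proposition 4.2.19]{HNVW16}. The underlying reasons are that $\UMD$ implies $K$-convexity (a theorem of Bourgain), hence nontrivial type by Pisier's theorem, and that $\UMD$ implies super-reflexivity, so $X$ does not contain $\ell^\infty_n$'s uniformly, forcing finite cotype via Maurey--Pisier.

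Second I would apply the translation between type/cotype and convexity/concavity that is available specifically in the Banach lattice setting. Corollary 1.f.9 of \cite{LT79}, together with Theorem 11.1.14 of \cite{AK06}, tells us that a Banach lattice with nontrivial type is $p$-convex for some $p > 1$, and a Banach lattice with finite cotype is $q$-concave for some $q < \infty$. Applying both statements to $X$, we obtain exponents $p > 1$ and $q < \infty$ with $X$ being simultaneously $p$-convex and $q$-concave; the constraint $p \le q$ is then automatic from the general fact (already recorded in Section \ref{subs:Bfs}) that $p$-convexity and $q$-concavity force $p \le q$.

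The only delicate point is the well-known loss of exponents when converting type to convexity and cotype to concavity: one does not in general recover $p$-convexity from type $p$, only some $p' > 1$-convexity, and similarly on the concavity side. This loss is harmless for us since Proposition \ref{prop:UMDtype} only asserts the \emph{existence} of suitable exponents. So the main work has already been done in \cite{HNVW16,LT79,AK06}, and no new calculation is needed beyond quoting these results in the right order.
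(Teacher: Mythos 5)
Your argument reproduces the paper's own route: the paper proves this proposition by combining exactly the three references you invoke (\cite[Proposition 4.2.19]{HNVW16}, \cite[Theorem 11.1.14]{AK06}, \cite[Corollary 1.f.9]{LT79}) through type and cotype, just as you describe. The only loose end is that you deduce $p\le q$ whereas the statement asserts $p<q$; this is immediately repaired via Proposition~\ref{prop:convexconcaveinterval}, since $p$-convexity can always be weakened to $p'$-convexity for $p'\in(1,p)$ and $q$-concavity to $q'$-concavity for $q'\in(q,\infty)$.
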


A connection between the $\UMD$ property and the Hardy--Littlewood maximal operator is provided via $L^p(\RR^d;X)$-boundedness of the \emph{lattice maximal operator} $\wt{M}$.
Let $X$ be a Banach function space over a measure space $\Omega$.
For all simple functions $f\in\Sigma(\RR^d;X)$ let
\begin{equation*}
  \wt{M} f(x,\omega) = M(f(\cdot,\omega))(x), \qquad (x,\omega) \in \RR^d \times  \Omega,
\end{equation*}
where $M$ is the Hardy--Littlewood maximal operator as defined in \eqref{eq:Maximalf}. Recall that $\inc_{a,b,\ldots}$ denotes an unspecified nondecreasing function $[1,\infty) \to [1,\infty)$ which depends only on the parameters $a,b,\ldots$, and which may change from line to line.

\begin{thm}\label{thm:maximalfunctionweighted}
  Suppose $X$ is a $\UMD$ Banach function space, $p \in (1,\infty)$, and $w\in A_p$.
  Then $\wt{M}$ is bounded on $L^p(w;X)$, and
  \begin{equation*}
    \|\widetilde{M}\|_{\mc{B}(L^p(w;X))} \leq  \inc_{X,p}([w]_{A_p}).
  \end{equation*}
\end{thm}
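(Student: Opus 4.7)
The plan is to combine the unweighted vector-valued maximal inequality, due to Bourgain and Rubio de Francia, with an $A_1$-factorization argument to handle general $A_p$-weights, and then to invoke the monotonization principle from Appendix \ref{sec:weight dependence} to upgrade the weight dependence to a nondecreasing function.

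The first step is to quote the classical unweighted bound: for any UMD Banach function space $X$ and any $p \in (1,\infty)$, the lattice maximal operator $\widetilde{M}$ is bounded on $L^p(\RR^d;X)$ with norm depending only on $X$ and $p$. This is proved in \cite{Bour:BCP, jR86}, where the UMD property of $X$ is used to handle the vector-valued oscillation estimates that replace the scalar Calder\'on--Zygmund decomposition.

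The second step is to reduce the weighted bound to the unweighted bound via Rubio de Francia's $A_1$-factorization (Jones' theorem): given $w \in A_p$ with $p \in (1,\infty)$, there exist $w_1, w_2 \in A_1$ with $w = w_1 w_2^{1-p}$ and with $[w_i]_{A_1}$ controlled by a monotone function of $[w]_{A_p}$. Combined with H\"older's inequality and the positivity of $\widetilde{M}$, this reduces the weighted estimate to a bound of the form $\|\widetilde{M}f\|_{L^p(w;X)} \leq F_{X,p}([w]_{A_p}) \|f\|_{L^p(w;X)}$, with $F_{X,p}$ built from Step~1 and the weighted scalar estimate in Proposition~\ref{prop:muckenhoupt}(\ref{it:mw4}). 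Finally, the monotonization principle of Appendix \ref{sec:weight dependence} replaces $F_{X,p}$ with a nondecreasing $\inc_{X,p}$.

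The main obstacle is Step~2: carrying out the factorization while respecting the lattice structure of $X$. A perhaps cleaner alternative route is via sparse domination of $\widetilde{M}$: pointwise in the lattice $X$ one shows $\widetilde{M}f \lesssim \sum_{Q \in \mathcal{S}} \langle f \rangle_Q \ind_Q$ for some sparse family $\mathcal{S}$, from which weighted estimates with explicit, manifestly monotone constants in $[w]_{A_p}$ follow directly from the scalar sparse theory; this bypasses the need to unpack Bourgain's proof quantitatively, at the cost of having to verify the sparse bound in the lattice setting.
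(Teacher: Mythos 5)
The paper does not prove this statement in-text: it is quoted as known, with the unweighted torus case attributed to \cite{Bour:BCP,jR86} and the weighted case on $\RR^d$ to \cite{GCMT93}, with \cite[Theorem 5.6.4]{eL16} for quantitative $[w]_{A_p}$-dependence. So the comparison is really with those external proofs.

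Your Step~1 (quoting the unweighted bound) and the appeal to Theorem~\ref{thm:increasingconstant} to upgrade to nondecreasing dependence are both fine. The genuine gap is Step~2. The Jones factorization $w = w_1 w_2^{1-p}$ together with ``H\"older's inequality and the positivity of $\widetilde{M}$'' does not by itself reduce the weighted lattice bound to the unweighted lattice bound plus the weighted scalar bound. To peel off an $A_1$ factor one would want something like $\widetilde M(f u) \eqsim u \,\widetilde M f$ pointwise, and no such identity is available; the pointwise comparison that does hold, $M(gu)\le [u]_{A_1}^2\, u\, Mg$ for $u\in A_1$, goes the wrong way for this purpose. The classical way to make factorization effective for a sublinear operator is via a duality argument and a Rubio de Francia iteration producing an $A_1$ majorant in the dual space -- which is exactly Lemma~\ref{lem:RdF-algo} of the present paper, whose proof already invokes Theorem~\ref{thm:maximalfunctionweighted}; running that iteration to prove the theorem would be circular. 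The actual arguments in \cite{GCMT93} and \cite{eL16} involve genuine additional work (self-improvement of the Hardy--Littlewood property across exponents plus a good-$\lambda$/reverse-H\"older-type bootstrap, resp.\ a quantitative refinement thereof) and are not a formal consequence of ``unweighted vector $+$ weighted scalar.''

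The sparse alternative you sketch is closer in spirit to a modern proof, but the claimed lattice-pointwise domination $\widetilde M f \lesssim \sum_{Q\in\mathcal S}\langle f\rangle_Q \ind_Q$ is itself the crux, not a formality: the Calder\'on--Zygmund stopping family for $M f(\cdot,\omega)$ depends on $\omega$, so a single sparse family working uniformly in the lattice variable has to be constructed (e.g.\ by stopping on $\|f\|_X$ and controlling the error, which uses UMD). You flag this as ``having to verify the sparse bound in the lattice setting,'' but that verification is essentially the content of the theorem. As written the proposal offers two plausible entry points and completes neither; unless you supply the missing step (or simply cite \cite{GCMT93} and \cite[Theorem 5.6.4]{eL16} as the paper does), this is not yet a proof.
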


Note that $\widetilde{M}$ was initially defined on $\Sigma(\RR^d;X)$, but can now be extended to $L^p(w;X)$ by density and boundedness (see Lemma \ref{lem:densityextension}).
A converse to Theorem \ref{thm:maximalfunctionweighted} also holds: if $\wt{M}$ is bounded on both $L^p(\RR^d;X)$ and $L^p(\RR^d;X^*)$, then $X$ is $\UMD$.
The unweighted case of Theorem \ref{thm:maximalfunctionweighted} on the torus is proved in \cite{Bour:BCP} and \cite{jR86} and the weighted case on $\RR^d$ in \cite{GCMT93} (see \cite[Theorem 5.6.4]{eL16} for more precise dependence on $[w]_{A_p}$).

We often consider $s$-convex Banach function spaces $X$ such that $X^s$ is $\UMD$.
This condition is open in $s$: if $X^s$ is $\UMD$, then there exists $\varepsilon > 0$ such that $X^r$ is $\UMD$ for all $0 < r < s+\varepsilon$ \cite[Theorem 4]{jR86}.
In particular if $X^s$ is $\UMD$ for some $s \geq 1$, then $X$ is $\UMD$, and conversely if $X$ is $\UMD$ then $X^s$ is $\UMD$ for some $s > 1$.

\begin{rmk}
  Throughout the paper we will write `$X^s \in \UMD$' as a shortcut for `$X^s$ is a Banach space with the $\UMD$ property'.
  If $s \geq 1$ this therefore implies that $X$ is $s$-convex.
\end{rmk}

\section{Extrapolation}\label{sec:extrapolation}
One of the most important features of the Muckenhoupt classes is the celebrated Rubio de Francia extrapolation theorem (see \cite{jR82,jR83,jR84,Rub87} and \cite[Chapter IV]{GR85}).
This allows one to deduce estimates for \textit{all} $p\in (1,\infty)$ and all $w \in A_p$ from the corresponding estimates for a \emph{single} $p \in (1,\infty)$ and all $w \in A_{p}$.
A rescaled version of the theorem can be formulated as follows; see \cite[Theorems 3.9 and Corollary 3.14]{CMP11} for a simple proof.
Recall our convention that $\inc_{a,b,\cdots}$ denotes an unspecified nondecreasing function $[1,\infty) \to [1,\infty)$ which depends only on the parameters $a,b,\cdots$ and which may change from line to line.

\begin{thm}[Scalar-valued rescaled extrapolation]\label{thm:extrapolation}
  Fix $p_0\in (0,\infty)$. Suppose that $\mc{F} \subset L_+^0(\RR^d) \times L_+^0(\RR^d)$ and that for all $(f,g) \in \mc{F}$, some $p\in(p_0,\infty)$, and all $w \in A_{p/p_0}$, the estimate
\begin{equation*}
  \nrm{ f}_{L^{p}(w)} \leq \inc_{p,p_0}([w]_{A_{p/p_0}}) \nrm{ g}_{L^{p}(w)}
\end{equation*}
holds.
Then the same estimate holds for all $(f,g) \in \mc{F}$, $p \in (p_0,\infty)$, and $w \in A_{p/p_0}$.
\end{thm}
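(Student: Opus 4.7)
My plan is a two-step reduction: first strip the rescaling parameter $p_0$ via a pointwise power substitution, then invoke the classical ($p_0=1$) Rubio de Francia extrapolation theorem. The key identity for the first step is $\|h^{p_0}\|_{L^q(w)} = \|h\|_{L^{qp_0}(w)}^{p_0}$ for $h \geq 0$. Setting $\mc{F}' := \{(f^{p_0}, g^{p_0}) : (f,g) \in \mc{F}\} \subset L^0_+(\RR^d) \times L^0_+(\RR^d)$ and $q_0 := p/p_0 \in (1,\infty)$, the hypothesis on $\mc{F}$ is equivalent to
\[
  \|F\|_{L^{q_0}(w)} \leq \inc_{p,p_0}([w]_{A_{q_0}}) \|G\|_{L^{q_0}(w)}, \qquad (F,G) \in \mc{F}', \ w \in A_{q_0},
\]
(with a different $\inc$, namely the original one raised to the $p_0$-th power). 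The desired conclusion is similarly equivalent to the same estimate for $\mc{F}'$ at every $q \in (1,\infty)$ and $w \in A_q$. Thus the problem reduces to the classical $p_0 = 1$ case.

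For that case, I would use Rubio de Francia's iteration algorithm: given a weight $v$ with $M$ bounded on $L^s(v)$, define $Rh := \sum_{k=0}^\infty M^k h / (2\|M\|_{\mc{B}(L^s(v))})^k$ for $0 \leq h \in L^s(v)$. Then $h \leq Rh$, $\|Rh\|_{L^s(v)} \leq 2\|h\|_{L^s(v)}$, and $Rh \in A_1$ with $[Rh]_{A_1} \leq 2\|M\|_{\mc{B}(L^s(v))}$, a quantity nondecreasing in $[v]_{A_s}$ by Proposition \ref{prop:muckenhoupt}(iii). To obtain the estimate at a target $q \in (1,\infty)$ with $w \in A_q$, split into the cases $q > q_0$ and $q < q_0$. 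In each case, duality combined with the iteration algorithm produces a modified weight that lies in $A_{q_0}$ with characteristic controlled by $[w]_{A_q}$: Jones' factorization $A_{q_0} = A_1 \cdot A_1^{1-q_0}$ underlies this construction. The hypothesis at $q_0$ then yields the desired bound after undoing the duality via H\"older's inequality.

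The main technical challenge is tracking the monotone dependence of the final constant on $[w]_{A_{p/p_0}}$ through the chain of estimates. Each intermediate constant --- the operator norm of $M$, the $A_1$-characteristic of $Rh$, the $A_{q_0}$-characteristic of the constructed weight, and the hypothesized constant at $q_0$ --- depends nondecreasingly on the relevant Muckenhoupt characteristic, and monotonicity is preserved under composition. As indicated in Appendix \ref{sec:weight dependence}, any estimate with unspecified dependence on $[w]_{A_{p/p_0}}$ can be upgraded to one with monotone dependence, so sharp tracking of intermediate constants is not necessary.
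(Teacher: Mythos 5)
Your proof is correct and follows the standard route; the paper does not include its own proof of Theorem~\ref{thm:extrapolation} but cites \cite[Theorem 3.9, Corollary 3.14]{CMP11}, where the reduction to $p_0=1$ is done by exactly your power substitution $\mc{F} \mapsto \{(f^{p_0},g^{p_0})\}$ (this same substitution also appears verbatim in Step~2 of the paper's proof of Theorem~\ref{thm:pair-extrap-p}), and the $p_0=1$ case is proved via the Rubio de Francia iteration algorithm and Jones-type factorisation just as you sketch. The only micro-imprecision worth flagging: Proposition~\ref{prop:muckenhoupt}(iii) does not say that $\|M\|_{\mc{B}(L^s(v))}$ is itself nondecreasing in $[v]_{A_s}$, only that it is \emph{bounded by} a nondecreasing function of $[v]_{A_s}$; that weaker statement is what you actually use and it suffices, since the goal is a nondecreasing $\inc$, not exact tracking of $\|M\|$.
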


In this section we prove the following vector-valued extrapolation theorem, which extends another of Rubio de Francia's extrapolation theorems \cite[Theorem 5]{jR86}.

\begin{thm}[Vector-valued rescaled extrapolation]\label{thm:pair-extrap-p}
	Fix $p_0 \in (0,\infty)$ and suppose that $X$ is a Banach function space over a measure space $(\Omega,\mu)$ with $X^{p_0}\in \UMD$.
Suppose that $\mc{F} \subset L^0_+(\RR^d;X) \times L^0_+(\RR^d;X)$ and that for all $p > p_0$, $(f,g) \in \mc{F}$, and $w \in A_{p/p_0}$, we have
  \begin{equation}\label{eqn:extrap-assn}
    \nrm{f(\cdot,\omega)}_{L^p(w)} \leq \inc_{p,p_0} ([w]_{A_{p/p_0}}) \nrm{g(\cdot,\omega)}_{L^p(w)}, \qquad \mu\text{-a.e. }\omega \in \Omega.
  \end{equation}
  Then for all $p>p_0$, $(f,g) \in \mc{F}$, and $w \in A_{p/p_0}$, we have
  \begin{equation}\label{eqn:extrap-goal}
    \nrm{f}_{L^p(w;X)} \leq \inc_{X,p,p_0} ( [w]_{A_{p/p_0}} ) \nrm{g}_{L^p(w;X)}.
  \end{equation}
\end{thm}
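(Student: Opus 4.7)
\emph{Proof plan.} First I would reduce to the case $p_0 = 1$ by concavification. Since $X^{p_0}$ is by assumption a $\UMD$ Banach function space, the map $h \mapsto h^{p_0}$ carries $L^0_+(\RR^d;X)$ into $L^0_+(\RR^d;X^{p_0})$ and satisfies $\|h^{p_0}\|_{L^{p/p_0}(w;X^{p_0})} = \|h\|_{L^p(w;X)}^{p_0}$. Applied to the family $\mc{F}' := \{(f^{p_0}, g^{p_0}) : (f,g) \in \mc{F}\}$ over $Y := X^{p_0}$, the hypothesis \eqref{eqn:extrap-assn} and the goal \eqref{eqn:extrap-goal} transform into their $p_0 = 1$ analogues (with $p$ replaced by $p/p_0$ and $X$ by $Y$). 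So I may assume $p_0 = 1$, $X \in \UMD$, and fix $p > 1$, $w \in A_p$, $(f,g) \in \mc{F}$ with $g \in L^p(w;X)$ (else \eqref{eqn:extrap-goal} is trivial).

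Next, I would dualize and run the Rubio de Francia algorithm on both sides. Let $\sigma := w^{1-p'} \in A_{p'}$, with $[\sigma]_{A_{p'}} \leq \inc_p([w]_{A_p})$. Combining scalar $L^p(w)$--$L^{p'}(\sigma)$ duality with pointwise K\"othe $X$--$X'$ duality gives
\begin{equation*}
  \|f\|_{L^p(w;X)} = \sup\biggl\{ \int_\Omega \int_{\RR^d} f \phi \dd x \dd\mu(\omega) : \phi \geq 0,\ \|\phi\|_{L^{p'}(\sigma;X')} \leq 1\biggr\}.
\end{equation*}
Since $X \in \UMD$ implies $X' \in \UMD$, Theorem~\ref{thm:maximalfunctionweighted} gives $\widetilde{M}$ bounded on both $L^p(w;X)$ and $L^{p'}(\sigma;X')$ with norms $\leq \inc_{X,p}([w]_{A_p})$. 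Let $\mc{R}$ and $\mc{S}$ denote the Rubio de Francia operators (geometric series in $\widetilde{M}$) on $L^{p'}(\sigma;X')$ and $L^p(w;X)$, respectively. Then $\mc{R}\phi \geq \phi$ and $\mc{S}g \geq g$ pointwise, $\|\mc{R}\phi\|_{L^{p'}(\sigma;X')} \leq 2$, $\|\mc{S}g\|_{L^p(w;X)} \leq 2\|g\|_{L^p(w;X)}$, and $\mu$-a.e.\ in $\omega$ both $\mc{R}\phi(\cdot,\omega)$ and $\mc{S}g(\cdot,\omega)$ lie in $A_1$ with characteristics $\leq \inc_{X,p}([w]_{A_p})$.

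Finally, I would splice the two iterations. For each $\omega$ define $u_\omega := \mc{R}\phi(\cdot,\omega) \cdot \mc{S}g(\cdot,\omega)^{1-p}$. The easy direction of Jones factorization ($A_1 \cdot A_1^{1-p} \subset A_p$) gives $u_\omega \in A_p$ with characteristic uniformly controlled in $\omega$. H\"older in $x$ followed by the scalar hypothesis at weight $u_\omega$ yields
\begin{equation*}
  \int_{\RR^d} f \mc{R}\phi \dd x \leq \inc_{X,p}([w]_{A_p}) \|g(\cdot,\omega)\|_{L^p(u_\omega)} \|\mc{R}\phi(\cdot,\omega)\|_{L^{p'}(u_\omega^{1-p'})}.
\end{equation*}
The algebraic identity $(1-p)(1-p') = 1$ together with $g \leq \mc{S}g$ makes both factors collapse to the same scalar integral: $\|\mc{R}\phi(\cdot,\omega)\|_{L^{p'}(u_\omega^{1-p'})}^{p'} = \int \mc{R}\phi \cdot \mc{S}g \dd x$ exactly, and $\|g(\cdot,\omega)\|_{L^p(u_\omega)}^p \leq \int \mc{R}\phi \cdot \mc{S}g \dd x$ by domination. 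Hence $\int f\mc{R}\phi \dd x \leq \inc_{X,p}([w]_{A_p}) \int \mc{R}\phi\cdot\mc{S}g \dd x$. Integrating in $\omega$, applying Fubini, the pointwise $X$--$X'$ K\"othe pairing in $\omega$, and weighted H\"older in $x$ bounds $\int\int f\phi\dd x\dd\mu$ by $\inc_{X,p}([w]_{A_p}) \|\mc{R}\phi\|_{L^{p'}(\sigma;X')} \|\mc{S}g\|_{L^p(w;X)} \leq \inc_{X,p}([w]_{A_p}) \|g\|_{L^p(w;X)}$. Taking the supremum over $\phi$ yields \eqref{eqn:extrap-goal}. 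The main obstacle is designing $u_\omega$: one must deploy the algorithm both in $L^p(w;X)$ and in its $X'$-valued dual space, then combine the two outputs via Jones factorization so that the exponents conspire to reconstruct exactly the $X$--$X'$ pairing; otherwise, naive H\"older or direct Fubini arguments only recover the case $X = L^p(\mu)$.
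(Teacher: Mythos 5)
Your proof is correct, but it takes a genuinely different route from the paper's. The paper adapts Rubio de Francia's original approach from \cite[Theorem~5]{jR86}: it first picks $q \in (1,p)$ with $X$ $q$-convex, dualises the $q$-concavified norm against nonnegative $u \in L^{(p/q)'}(w;(X^q)^*)$, and then invokes a single-sided ``weight extraction'' lemma (Lemma~\ref{lem:RdF-algo}, which in turn rests on the Minimax-based Lemma~\ref{lem:RdF1}) to produce $v \geq u$ with $v(\cdot,\omega)w \in A_q$ for a.e.\ $\omega$; the scalar hypothesis is then applied at exponent $q$, not $p$. You instead dualise $\|f\|_{L^p(w;X)}$ directly at exponent $p$ against $L^{p'}(\sigma;X')$ (using K\"othe duality and $\sigma = w^{1-p'}$), run the standard Rubio de Francia iteration $\sum_k (\widetilde{M}/2K)^k$ twice — once on $L^p(w;X)$ applied to $g$, once on $L^{p'}(\sigma;X')$ applied to the dual test function $\phi$ — and splice the two outputs with the ``easy'' direction of Jones factorisation ($A_1 \cdot A_1^{1-p} \subset A_p$) so that the exponent algebra collapses both sides of H\"older to $\int \mc{R}\phi \cdot \mc{S}g \, \mathrm{d}x$. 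This is the Duoandikoetxea/\cite{CMP11}-style two-sided algorithm transported to the lattice setting, and it avoids both the $q$-concavification and the Minimax lemma, at the cost of needing $\widetilde{M}$ bounded on \emph{two} spaces, $L^p(w;X)$ and $L^{p'}(\sigma;X')$; but $X \in \UMD \Leftrightarrow X' \in \UMD$ and $\sigma \in A_{p'}$, so this costs nothing. A small technical point worth recording if you write this up: you implicitly assume that for a.e.\ $\omega$ the scalar estimate holds simultaneously for the uncountable family of weights $u_\omega$. This is resolved exactly as the paper resolves it in Lemma~\ref{lem:RdF-algo} (reduce to a countable dense family of test weights via scalar extrapolation, then take the countable union of null sets), so it is not a gap in the idea, but it deserves a line. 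You should also dispose of the degenerate cases $g(\cdot,\omega) \equiv 0$ or $\phi(\cdot,\omega) \equiv 0$ (where $u_\omega$ is not a weight) by noting they contribute nothing. With those caveats the argument is complete and yields the same conclusion and the same quantitative dependence on $[w]_{A_{p/p_0}}$.
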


\begin{rmk}
By Theorem \ref{thm:extrapolation} it suffices to have \eqref{eqn:extrap-assn} for some $p \in (p_0,\infty)$ and all $w \in A_{p/p_0}$.
\end{rmk}

To prove Theorem \ref{thm:pair-extrap-p} we need some preliminary lemmas.
The first is a combination of \cite[Lemma 1, p. 217]{jR86} and \cite[Corollary 5.3]{GR85}.
We include the proof for the reader's convenience.
The second is a modification of \cite[Lemma 2, p. 218]{jR86}.
We emphasise that the operators need not be linear, and that if $Y$ is $\UMD$, then it is reflexive and thus order continuous (see \cite[Section 2.4]{MeyNie}).

\begin{lem}\label{lem:RdF1}
  Suppose $q \in [1,\infty)$.
  Let $Y$ be a $q$-convex order continuous Banach function space over a measure space $(\Omega,\mu)$, and let $\Gamma\subset \mc{B}(Y)$ be a set of bounded operators such that $\abs{T(\lambda y)} = \lambda \,\abs{T(y)}$ for all $y \in Y$, $T \in \Gamma$ and $\lambda>0$.
  Then the following are equivalent:
  \begin{enumerate}[(i)]
  \item There exists $C_1>0$ such that for all $T_1,\cdots,T_n \in \Gamma$ and $y_1,\cdots y_n \in Y$,
    \begin{equation}
      \label{eq:RdF1}
      \nrms{\has{\sum_{k=1}^n\abs{T_ky_k}^q}^\frac{1}{q}}_Y \leq C_1 \nrms{\has{\sum_{k=1}^n \abs{y_k}^q}^\frac{1}{q}}_Y.
    \end{equation}
  \item There exists $C_2>0$ such that for every nonnegative $u \in (Y^q)^*$, there exists $v \in (Y^q)^*$ with $u \leq v$, $\nrm{v} \leq 2\nrm{u}$, and
    \begin{equation}
      \label{eq:RdF2}
      \int_\Omega |Ty|^q v \dd\mu \leq C_2 \, \int_\Omega |y|^q v \dd\mu, \qquad y \in Y, \quad T \in \Gamma.
    \end{equation}
  \end{enumerate}
  Moreover $C_1 \eqsim C_2$.
\end{lem}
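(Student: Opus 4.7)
I would first reformulate everything on the $q$-concavification $Z := Y^q$. Since $Y$ is $q$-convex and order continuous, $Z$ is itself an order continuous Banach function space, so its Banach dual coincides with the Köthe dual and the pairing is integration against $\mu$. Because $\nrm{\cdot}_Y^q = \nrm{\cdot}_Z$ on non-negative elements, statement (i) is equivalent to
\begin{equation*}
    \nrms{\sum_{k=1}^n |T_k y_k|^q}_Z \leq C_1^q \nrms{\sum_{k=1}^n |y_k|^q}_Z,
\end{equation*}
so that (i) and (ii) become directly comparable $Z$-level statements; the asserted equivalence $C_1 \eqsim C_2$ is to be read as $C_1^q \eqsim C_2$, with an implicit constant depending on $q$.

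The implication (ii) $\Rightarrow$ (i) I would obtain by pure duality. Given a finite collection $(T_k, y_k)_{k=1}^n$, order continuity of $Z$ furnishes a norming $u \in (Z^*)_+$ with $\nrm{u}_{Z^*} = 1$ and $\nrms{\sum_k |T_k y_k|^q}_Z = \int_\Omega \sum_k |T_k y_k|^q u \, d\mu$. Apply (ii) to produce $v \geq u$ with $\nrm{v}_{Z^*} \leq 2$ and $\int |T_k y_k|^q v \leq C_2 \int |y_k|^q v$ for each $k$; summing in $k$ and using $v \geq u$ at one end together with $\nrm{v}_{Z^*} \leq 2$ at the other yields $\nrms{\sum_k |T_k y_k|^q}_Z \leq 2 C_2 \nrms{\sum_k |y_k|^q}_Z$, i.e.\ $C_1^q \leq 2 C_2$.

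For the substantive direction (i) $\Rightarrow$ (ii) I would run a Rubio de Francia-type iteration on $Z^*$. Starting from $u_0 := u$, the plan is to build inductively $u_n \in (Z^*)_+$ with $\nrm{u_n}_{Z^*} \leq (C_1^q)^n \nrm{u}_{Z^*}$ and
\begin{equation*}
    \int_\Omega |Ty|^q u_n\, d\mu \;\leq\; \int_\Omega |y|^q u_{n+1}\, d\mu \qquad (T \in \Gamma,\ y \in Y).
\end{equation*}
The geometric series $v := \sum_{n \geq 0} (2 C_1^q)^{-n} u_n$ then satisfies $v \geq u$, $\nrm{v}_{Z^*} \leq 2 \nrm{u}_{Z^*}$, and, by shifting the summation index, $\int |Ty|^q v \leq 2 C_1^q \int |y|^q v$, which is (ii) with $C_2 = 2 C_1^q$.

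The main obstacle is the inductive construction of $u_{n+1}$. Consider the gauge
\begin{equation*}
    p_n(z) := \sup\Big\{\int_\Omega \sum_k |T_k y_k|^q u_n\, d\mu : T_k \in \Gamma,\ \sum_k |y_k|^q \leq z\Big\} \qquad (z \in Z_+),
\end{equation*}
which is monotone, positively homogeneous of degree one, and \emph{super-additive} on $Z_+$ (by concatenating collections); Hölder's inequality combined with (i) gives $p_n(z) \leq C_1^q \nrm{u_n}_{Z^*} \nrm{z}_Z$. The non-linearity of operators in $\Gamma$ prevents $p_n$ from being sub-additive---one cannot split $y = y_1 + y_2$ compatibly with $|Ty|^q$---so the usual Hahn-Banach extension of a sublinear functional does not apply. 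I would instead invoke the Mazur-Orlicz sandwich form of Hahn-Banach, applied between the concave gauge $p_n$ on the cone $Z_+$ and the sublinear gauge $z \mapsto C_1^q \nrm{u_n}_{Z^*} \nrm{z}_Z$ on $Z$: this produces a linear functional $u_{n+1} \in Z^*$ with $\nrm{u_{n+1}}_{Z^*} \leq C_1^q \nrm{u_n}_{Z^*}$ and $u_{n+1}(z) \geq p_n(z) \geq 0$ for every $z \in Z_+$, so in fact $u_{n+1} \in (Z^*)_+$, closing the induction.
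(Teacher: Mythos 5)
Your argument is correct, and it departs from the paper's proof in the key step.  The direction (ii)\,$\Rightarrow$\,(i) is essentially identical to the paper's.  For the substantive direction (i)\,$\Rightarrow$\,(ii), the paper constructs the dominating weight by von Neumann's Minimax lemma: it introduces the auxiliary space $Z=L^q(\Omega,u)$, defines a convex set $A$ of pairs $(\sum_k|y_k|^q,\sum_k\|T_ky_k\|_Z^q)$ and the weak$^*$-compact convex set $B$ of nonnegative unit-ball elements of $(Y^q)^*$, and uses $\min_B\sup_A\Phi=\sup_A\min_B\Phi\le 0$ to extract $w_1\in B$ with $\Phi(\cdot,w_1)\le 0$; iteration keeps all $w_n$ in $B$.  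You instead package the same information into a superadditive, positively homogeneous gauge $p_n$ on the cone $Z_+$ (so concave there) and run the Mazur--Orlicz sandwich form of Hahn--Banach between $p_n$ and the sublinear majorant $C_1^q\nrm{u_n}_{Z^*}\nrm{\cdot}_Z$; the resulting functional is automatically positive (hence in $(Z^*)_+$, identified with the K\"othe dual by order continuity) and obeys the needed norm and domination bounds.  Both routes then close with the same geometric summation.  The trade-off is cosmetic: the Minimax formulation keeps $\nrm{w_n}\le 1$ and puts the constant $C_1^q$ in the recursion, whereas your formulation has recursion constant $1$ but accumulates $\nrm{u_n}\le (C_1^q)^n$, which the weights $(2C_1^q)^{-n}$ absorb; your gauge/sandwich version avoids introducing the auxiliary $L^q(u)$ space and the two-player bookkeeping, but the two arguments are of the same depth (both ultimately rest on convexity/separation).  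Your explicit note that the equivalence of constants should be read as $C_1^q\eqsim C_2$ is also a correct sharpening of the lemma's ``$C_1\eqsim C_2$.''
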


Note that $(Y^{q})^*$ is a Banach function since $Y$ is order continuous (see \cite[Section 1.b]{LT79}), so \eqref{eq:RdF2} is well-defined.

\begin{proof}
  We first prove (ii) $\Rightarrow$ (i). Let $y_1,\cdots,y_n \in Y$ and $T_1,\cdots,T_n \in \Gamma$.  Since $\sum_{k=1}^n \abs{T_ky_k}^q \in Y^{q}$ we can find a nonnegative $u \in (Y^{q})^*$ with $\nrm{u} =1$ such that
\begin{equation*}
  \nrms{\has{\sum_{k=1}^n\abs{T_ky_k}^q}^{\frac{1}{q}}}^q_{Y} = \nrms{\has{\sum_{k=1}^n\abs{T_ky_k}^q}}_{Y^{q}} = \int_{\Omega} \sum_{k=1}^n \abs{T_ky_k}^qu \dd\mu.
\end{equation*}
Then by assumption there exists $u \geq v$ with $\nrm{v} \leq 2$ and
\begin{equation*}
  \has{\int_{\Omega} \sum_{k=1}^n \abs{T_ky_k}^qu \dd\mu}^{\frac{1}{q}}
  \leq C_2 \has{\int_{\Omega} \sum_{k=1}^n \abs{y_k}^qv \dd\mu}^\frac{1}{q}
  \leq 2^{\frac{1}{q}} C_2  \nrms{\has{\sum_{k=1}^n \abs{y_k}^q}^\frac{1}{q}}_Y,
\end{equation*}
which proves \eqref{eq:RdF1}.

Now for (i) $\Rightarrow$ (ii) take a nonnegative $u \in (Y^q)^*$. Without loss of generality we may assume that $\nrm{u}\leq1$.
Let $Z := L^q(\Omega,u)$.
Then $\nrm{y}_Z \leq \nrm{y}_Y$ for all $y \in Y$, so $Y \hookrightarrow Z$.
We can therefore consider $\Gamma$ as a family of operators from $Y$ to $Z$ with
\begin{equation}\label{eq:RdF3}
  \has{\sum_{k=1}^n \nrm{T_ky_k}_Z^q}^\frac{1}{q}
  \leq \nrms{\has{\sum_{k=1}^n\abs{T_ky_k}^q}}_{Y^q}^\frac{1}{q}
  \leq C_1\nrms{\has{\sum_{k=1}^n \abs{y_k}^q}^\frac{1}{q}}_Y
\end{equation}
for all $y_1,\cdots y_n \in Y$ and $T_1,\cdots,T_n \in \Gamma$ by \eqref{eq:RdF1}.

Define the sets
\begin{align*}
  A &:= \cbraces{\has{\sum_{k=1}^n \abs{y_k}^q, \sum_{k=1}^n \nrm{T_ky_k}_Z^q}:  y_k \in Y , T_k \in \Gamma} \subset Y^{q} \times \RR,\\
  B &:= \cbraces{b \in (Y^{q})^*: \nrm{b} \leq 1 \text{ and } b \geq 0}.
\end{align*}
Since $\abs{T(\lambda y)} = \lambda\,\abs{T(y)}$ for all $y \in Y$, $T \in \Gamma$ and $0<\lambda<1$ we see that $A$ is convex.
The set $B$ is also convex, and by the Banach-Alaoglu theorem $B$ is weak$^*$-compact.

Define $\Phi \colon A \times B \to \RR$ by
\begin{equation*}
  \Phi(a,b) := \sum_{k=1}^n\nrm{T_ky_k}_Z^q -C_1^q \int_\Omega \sum_{k=1}^n\abs{y_k}^q \, b \dd\mu,  \qquad a=\has{\sum_{k=1}^n \abs{y_k}^q, \sum_{k=1}^n \nrm{T_ky_k}_Z^q}.
\end{equation*}
Then $\Phi$ is linear in its first coordinate and affine in its second.
Furthermore, by definition $\Phi(a,\cdot)$ is weak$^*$-continuous for all $a \in A$, and by \eqref{eq:RdF3} for any $a \in A$
\begin{align*}
  \min_{b \in B} \Phi(a,b)
  =  \sum_{k=1}^n\nrm{T_ky_k}_Z^q- C^q_1\nrms{\has{\sum_{k=1}^n\abs{y_k}^q}^\frac{1}{q}}^q_{Y} \leq 0.
\end{align*}
Thus by the Minimax lemma (see \cite[Appendix H]{lG08}),
\begin{equation*}
  \adjustlimits\min_{b \in B} \sup_{a \in A} \Phi(a,b)= \adjustlimits\sup_{a \in A} \min_{b \in B}  \Phi(a,b)  \leq 0,
\end{equation*}
so there exists $w_1 \in B$ such that $\Phi(a,w_1) \leq 0$ for all $a \in A$.
In particular, for any $y \in Y$ and $T \in \Gamma$ we find that
\begin{equation*}
 \int_\Omega |Ty|^q u \, d\mu  -C_1^q \int_\Omega \abs{y}^q w_1\dd \mu = \Phi\bigl((\abs{y}^q, \nrm{Ty}_Z^q),w_1\bigr) \leq 0.
\end{equation*}

Set $w_0 := u$.
Iterating the argument with $w_1$ in place of $u$ yields a sequence $(w_n)_{n=1}^\infty$ satisfying
\begin{equation*}
  \has{\int_\Omega\abs{Ty}^qw_n \dd\mu}^\frac{1}{q} \leq C_1 \has{\int_\Omega\abs{y}^qw_{n+1}  \dd\mu}^\frac{1}{q}, \qquad y \in Y, \quad T \in \Gamma
\end{equation*}
for all $n \in \NN$.
Then $v := \sum_{n=0}^\infty 2^{-n} w_n$ satisfies $u \leq v$, $\nrm{v} \leq 2$ and \eqref{eq:RdF2}.
\end{proof}

\begin{lem}\label{lem:RdF-algo}
  Suppose that $1<q<p<\infty$ and let $X$ be a  $q$-convex  $\UMD$ Banach function space over a measure space $(\Omega,\mu)$.
  Then for all $w \in A_p$ and every nonnegative $u \in L^{(p/q)^\prime}(w;(X^q)^*)$, there exists $v \in L^{(p/q)^\prime}(w;(X^q)^*)$ such that $u \leq v$, $\nrm{v} \leq 2\nrm{u}$, and $v(\cdot,\omega)w \in A_q$ for $\mu$-almost every $\omega \in \Omega$.
  Moreover,
  \begin{equation}\label{eqn:vw-Aq}
    [v(\cdot,\omega)w]_{A_q} \leq \inc_{X,p,q}([w]_{A_p}), \qquad \text{$\mu$-a.e. $\omega \in \Omega$.}
  \end{equation}
\end{lem}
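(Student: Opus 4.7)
The plan is to apply Lemma \ref{lem:RdF1} to the lattice maximal operator $\widetilde M$ on a weighted Bochner space built from $X$, and then extract the pointwise-in-$\omega$ $A_q$ condition from the resulting integrated inequality using a Fubini argument.

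Take $Y := L^p(w;X)$, a Banach function space over $\RR^d \times \Omega$. It is $q$-convex (since $p > q$ and $X$ is $q$-convex) and order continuous (since $X^q \in \UMD$ makes $X$ reflexive, hence order continuous). A direct computation gives $Y^q = L^{p/q}(w;X^q)$ and, by duality, $(Y^q)^* = L^{(p/q)'}(w^{1-(p/q)'};(X^q)^*)$. Moreover, $u \mapsto uw$ is an isometric bijection from $L^{(p/q)'}(w;(X^q)^*)$ onto $(Y^q)^*$.

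Choose $\Gamma := \{\widetilde M\}$. Condition (i) of Lemma \ref{lem:RdF1} becomes
\[
  \nrms{\has{\sum_k |\widetilde M y_k|^q}^{1/q}}_Y \leq C_1 \nrms{\has{\sum_k |y_k|^q}^{1/q}}_Y,
\]
which is equivalent to the boundedness of $\widetilde M$ on $L^p(w;\ell^q(X))$. Since $X^q \in \UMD$ implies $X \in \UMD$, and $\UMD$ passes to $\ell^q(X)$ for $1 < q < \infty$, Theorem \ref{thm:maximalfunctionweighted} yields $C_1 \leq \inc_{X,p,q}([w]_{A_p})$. Setting $\tilde u := uw \in (Y^q)^*$ and applying Lemma \ref{lem:RdF1}(ii), we obtain $\tilde v \in (Y^q)^*$ with $\tilde u \leq \tilde v$, $\nrm{\tilde v} \leq 2\nrm{\tilde u}$, and
\[
  \int_{\RR^d \times \Omega} |\widetilde M f|^q \tilde v \,dx\,d\mu \leq C_2 \int_{\RR^d \times \Omega} |f|^q \tilde v \,dx\,d\mu, \qquad f \in Y,
\]
with $C_2 \eqsim C_1$. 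Setting $v := \tilde v / w$ produces the desired $v \in L^{(p/q)'}(w;(X^q)^*)$ satisfying $u \leq v$, $\nrm{v} \leq 2\nrm{u}$, and the estimate above with $\tilde v$ replaced by $vw$.

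To extract the pointwise-in-$\omega$ bound, pick $\zeta \in X$ with $\zeta > 0$ from Definition \ref{def:bfs}(ii) and test this last inequality on $f(x,\omega) := g(x)\zeta(\omega)\mathbf{1}_A(\omega)$ for a scalar simple $g$ and an arbitrary measurable $A \subset \Omega$, noting that $\widetilde M f = \zeta \mathbf{1}_A \cdot Mg$. Fubini together with the arbitrariness of $A$ yields, for each $g$, a $\mu$-null set $N_g$ off which $\int_{\RR^d} |Mg|^q v(\cdot,\omega)w \,dx \leq C_2 \int_{\RR^d} |g|^q v(\cdot,\omega)w \,dx$. Taking a countable dense family $\mathcal D$ of simple $g$'s and $N := \bigcup_{g \in \mathcal D} N_g$, a density and Fatou argument shows that for $\omega \notin N$ the Hardy--Littlewood maximal operator $M$ is bounded on $L^q(v(\cdot,\omega)w)$ with norm at most $C_2^{1/q}$. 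Proposition \ref{prop:muckenhoupt}(iii) then yields $[v(\cdot,\omega)w]_{A_q} \leq \inc_{X,p,q}([w]_{A_p})$ for $\mu$-almost every $\omega$. The main obstacle is this final step: correctly handling the change of weight $u \mapsto uw$ induced by passing to $(Y^q)^*$, and then transferring an integrated-over-$\Omega$ estimate to a pointwise-in-$\omega$ statement with an exceptional null set independent of the scalar test function $g$.
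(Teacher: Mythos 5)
Your proposal is correct and takes essentially the same route as the paper: apply Lemma \ref{lem:RdF1} with $Y = L^p(w;X)$ and $\Gamma = \{\widetilde M\}$, then extract the pointwise-in-$\omega$ $A_q$ bound by testing on separated tensor products and a countability argument. Two small points worth flagging: first, condition (i) for $\Gamma = \{\widetilde M\}$ on $Y$ is the boundedness of $\widetilde M$ on $L^p(w;X(\ell^q))$ (lattice-valued $\ell^q$), not $\ell^q(X)$ (strong $\ell^q$ sum of norms); these are different spaces, and the relevant citation is that $X \in \UMD$ implies $X(\ell^q) \in \UMD$, which the paper attributes to Rubio de Francia. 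Second, the hypothesis of the lemma is that $X$ itself is $\UMD$, which already gives reflexivity and order continuity; writing ``$X^q \in \UMD$'' instead is a slip, though harmless. Your handling of the weight via the isometry $u \mapsto uw$ between $L^{(p/q)'}(w;(X^q)^*)$ and $(Y^q)^*$ is equivalent to the paper's device of computing $(Y^q)^*$ with respect to the measure $w\,\dd x$; both yield exactly \eqref{eqn:v-extraction}. The final extraction via a countable family of rational simple test functions, exceptional null set, density and Fatou, and Proposition \ref{prop:muckenhoupt}\eqref{it:mw4} mirrors the paper's argument with $\mc{A}$ and $\mc{B}$ --- though you should note (as the paper does) that $v(\cdot,\omega)w \in L^1_{\loc}$ for a.e.\ $\omega$ must be established first, so that the density argument on $L^q(v(\cdot,\omega)w)$ is legitimate.
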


\begin{proof}
  Suppose $w \in A_p$ and $u \in L^{(p/q)^\prime}(w;(X^q)^*)$.
  By \cite[p.\ 214]{jR86}, $X(\ell^q)$ has the UMD property.
  Thus, by Theorem \ref{thm:maximalfunctionweighted} the lattice maximal operator $\widetilde{M}$ satisfies \eqref{eq:RdF1} for $Y = L^p(w;X)$, with constant $\inc_{X,p,q}([w]_{A_p})$.
  Note that $Y$ is $q$-convex since $q < p$ and by \cite[Theorem 1.3.10]{HNVW16},
  \begin{equation*}
    (Y^q)^* = (L^{(p/q)}(w;X^q))^* = L^{(p/q)^\prime}(w;(X^q)^*),
  \end{equation*}
  using $w\dd x$ as the measure on $\RR^d$ in the second equality.

  Applying Lemma \ref{lem:RdF1} to $Y$ with $T=\widetilde{M}$, we deduce that there exists $v \in L^{(p/q)^\prime}(w;(X^q)^*)$ with $u \leq v$, $\nrm{v} \leq 2\nrm{u}$, and
  \begin{equation}\label{eqn:v-extraction}
    \int_{\RR^d} \int_{\Omega} |\widetilde{M}f|^q  v  \dd\mu \, w \dd x  \leq \inc_{X,p,q}([w]_{A_p}) \int_{\RR^d} \int_{\Omega} |f|^q   v  \dd \mu \, w \dd x
  \end{equation}
  for all $f \in L^p(w;X)$. Now let
  \begin{align*}
    \mc{A} &= \cbraces{\sum_{j=1}^n a_j\ind_{Q_j}: a_j  \in \QQ \oplus i\QQ \text{ and } Q_j\subset \RR^d \text{ rectangles with rational endpoints}},
  \intertext{fix $\zeta \in X$ with $\zeta>0$ and define}
    \mc{B} &= \cbrace{f: f(x,\omega) = \varphi(x)\ind_E(\omega)\zeta(\omega) \text{ with } \varphi \in \mc{A}, E \subseteq \Omega \text{ measurable} } \subseteq L^p(w;X) .
  \end{align*}
  Then we have $\mc{A} \subset L^q(v(\cdot,\omega)w)$ for $\mu$-a.e. $\omega \in \Omega$, since $\mathcal{A}$ is countable and $L^p(w;X)\subseteq L^q(\RR^d \times \Omega, v w)$. Thus $v(\cdot,\omega)w \in L^1_{\loc}(\RR^d)$ and therefore we know that $\mc{A}$ is dense in $L^q(v(\cdot,\omega)w)$ for $\mu$-a.e. $\omega \in \Omega$.  Moreover testing \eqref{eqn:v-extraction} on all $f \in \mc{B}$ we find that
  \begin{equation*}
    \int_{\RR^d} |M\varphi(x)|^q \, v(x,\omega) w(x) \dd x \leq \inc_{X,p,q}([w]_{A_p}) \int_{\RR^d} |\varphi(x)|^q v(x,\omega)w(x) \dd x
  \end{equation*}
for $\mu$-almost all $\omega \in \Omega$ and all $\varphi \in \mc{A}$, again since $\mc{A}$ is countable. So using Proposition \ref{prop:muckenhoupt}\eqref{it:mw4}, we find that
\begin{equation*}
  [v(\cdot,\omega)w]_{A_q} \leq \nrm{M}^q_{\mc{B}(L^q(v(\cdot,\omega)w))} \leq \inc_{X,p,q}([w]_{A_p}), \qquad \text{$\mu$-a.e. $\omega \in \Omega$}
\end{equation*}
as claimed.
\end{proof}

Now we can prove the main extrapolation theorem.

\begin{proof}[Proof of Theorem \ref{thm:pair-extrap-p}]\

  {\bf  Step 1:} $p_0=1$.
  Let $(f,g) \in \mc{F}$ and $w\in A_{p}$. By Proposition \ref{prop:UMDtype} there exists $q > 1$ such that $X$ is $q$-convex.
  Consider a nonnegative function $u \in L^{(p/q)^\prime}(w;(X^q)^*)$ and associate  $v \in L^{(p/q)^\prime}(w;(X^q)^*)$ with $u$ as in Lemma \ref{lem:RdF-algo}.
  Then we have
  \begin{align*}
    \int_{\RR^d} \int_\Omega  f^q u \dd\mu \, w\dd x
    &\leq \int_{\RR^d} \int_\Omega  f^q vw  \dd\mu \dd x \\
    &\leq \int_\Omega  \inc_q([v(\cdot,\omega)w]_{A_q}) \int_{\RR^d}   g^q\, vw \dd x \dd \mu \\
    &\leq \inc_{X,p,q} ( [w]_{A_p} )\int_{\RR^d} \int_\Omega  g^q\, v  \dd\mu \, w \dd x \\
    &\leq \inc_{X,p,q} ( [w]_{A_p} ) \nrm{g^q}_{L^{p/q}(w,X^q)} \nrm{v}_{L^{(p/q)^\prime}(w;(X^q)^*)} \\
    &\leq \inc_{X,p,q} ( [w]_{A_p} ) \nrm{g}_{L^p(w,X)}^q \nrm{u}_{L^{(p/q)^\prime}(w;(X^q)^*)},
  \end{align*}
  using the assumption \eqref{eqn:extrap-assn} and $v(\cdot,\omega)w \in A_q$ for a.e $\omega \in \Omega$ in the second line, and \eqref{eqn:vw-Aq} in the third line.
  Taking the supremum over all normalised $u$ yields \eqref{eqn:extrap-goal}.

  {\bf Step 2:} General $p_0\in (0, \infty)$.
  We argue as in \cite[Corollary 3.14]{CMP11}.	Define a set of pairs $\mc{F}^{p_0}$ by
  \begin{equation*}
    \mc{F}^{p_0} := \{(f^{p_0}, g^{p_0}) : (f,g) \in \mc{F}\}.
  \end{equation*}
  For all $p > p_0$, $w \in A_{p/p_0}$ and $(f^{p_0}, g^{p_0}) \in \mc{F}^{p_0}$, we then have
  \begin{equation*}
    \nrm{f(\cdot,\omega)^{p_0}}_{L^{p/p_0}(w)} \leq \inc_p ([w]_{A_{p/p_0}}) \nrm{g(\cdot,\omega)^{p_0}}_{L^{p/p_0}(w)}, \qquad \mu\text{-a.e. } \omega \in \Omega
  \end{equation*}
  by \eqref{eqn:extrap-assn}.
  Thus we may apply Step 1 to the set $\mc{F}^{p_0}$ and the UMD space $X^{p_0}$, yielding
  \begin{equation*}
    \nrm{f^\prime}_{L^{p/p_0}(w;X^{p_0})} \leq \inc_{X,p,p_0} ( [w]_{A_{p/p_0}} ) \nrm{g^\prime}_{L^{p/p_0}(w;X^{p_0})}
  \end{equation*}
  for all $(f^\prime,g^\prime) \in \mc{F}^{p_0}$ and all $w \in A_{p/p_0}$.
  Since  $(f,g) \in \mc{F}$ if and only if $(f^{p_0}, g^{p_0}) \in \mc{F}^{p_0}$, we get
  \begin{equation*}
    \nrm{g^{p_0}}_{L^{p/p_0}(w;X^{p_0})} \leq \inc_{X,p,p_0} ( [w]_{A_{p/p_0}} ) \nrm{f^{p_0}}_{L^{p/p_0}(w;X^{p_0})}
  \end{equation*}
  for all $(f,g) \in \mc{F}$ and all $w \in A_{p/p_0}$.
  Rearranging this yields \eqref{eqn:extrap-goal} for all $p > p_0$ and all $w \in A_{p/p_0}$.
\end{proof}

It is now easy to prove an extrapolation result for operators (which also implies Theorem \ref{thm:op-extrap-intro} from the introduction).

\begin{cor}\label{cor:op-extrap}
  Fix $p_0 \in (0,\infty)$.
  Suppose $T:\Sigma(\RR^d)\to L^0(\RR^d)$ satisfies
  \begin{equation}\label{eq:assTcor}
    |T(f) - T(g)|\leq |T(f-g)|, \qquad f,g\in \Sigma(\RR^d),
  \end{equation}
  and assume $T$ extends to a bounded operator on $L^p(\RR^d,w)$ for all $p > p_0$ and all Muckenhoupt weights $w \in A_{p/p_0}$ with
  \begin{equation*}
    \nrm{T}_{\mc{B}(L^p(w))} \leq \inc_{p,p_0}([w]_{A_{p/p_0}}).
  \end{equation*}
  Let $X$ be a Banach function space and assume that for all $f \in \Sigma(\RR^d;X)$ the function $\map{\widetilde{T}f}{\RR^d}{X}$, defined by
  \begin{equation*}
    \widetilde{T}f(x,\omega):= \bigl(Tf(\cdot,\omega)\bigr)(x), \qquad x \in \RR^d, \quad \omega \in \Omega,
  \end{equation*}
  is well-defined and strongly measurable.
  If $X^{p_0}\in \UMD$, then $\widetilde{T}$ extends to a bounded operator on $L^p(w;X)$ for all $p > p_0$ and $w \in A_{p/p_0}$, with
  \begin{equation}\label{eqn:extn-norm-est}
    \| \widetilde{T} \|_{\mc{B}(L^p(w;X))} \leq \inc_{X,p,p_0}([w]_{A_{p/p_0}}).
  \end{equation}
\end{cor}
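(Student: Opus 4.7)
The plan is to reduce this to the pair extrapolation theorem (Theorem \ref{thm:pair-extrap-p}) applied to the family
\begin{equation*}
  \mc{F} := \cbrace*{\bigl(\abs{\widetilde{T}f},\,\abs{f}\bigr) : f \in \Sigma(\RR^d;X)} \subset L^0_+(\RR^d;X) \times L^0_+(\RR^d;X).
\end{equation*}
Here each pair lies in $L^0_+(\RR^d;X) \times L^0_+(\RR^d;X)$ because $f \in \Sigma(\RR^d;X)$ is trivially strongly measurable, and $\widetilde{T}f$ is strongly measurable by hypothesis.

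First I would verify the scalar hypothesis \eqref{eqn:extrap-assn} slice-by-slice. For $f \in \Sigma(\RR^d;X)$ of the form $f = \sum_k \ind_{A_k} x_k$ with $x_k \in X$ and $A_k \subset \RR^d$ measurable, the slice $f(\cdot,\omega) = \sum_k x_k(\omega) \ind_{A_k}$ lies in $\Sigma(\RR^d)$ for every $\omega \in \Omega$. By the definition of $\widetilde{T}$ and the scalar-valued boundedness of $T$, this yields
\begin{equation*}
  \nrm{\widetilde{T}f(\cdot,\omega)}_{L^p(w)} = \nrm{Tf(\cdot,\omega)}_{L^p(w)} \leq \inc_{p,p_0}\bigl([w]_{A_{p/p_0}}\bigr) \nrm{f(\cdot,\omega)}_{L^p(w)}
\end{equation*}
for every $\omega \in \Omega$, $p > p_0$ and $w \in A_{p/p_0}$. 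Since $X^{p_0} \in \UMD$, Theorem \ref{thm:pair-extrap-p} then applies to $\mc{F}$, giving
\begin{equation*}
  \nrm{\widetilde{T}f}_{L^p(w;X)} \leq \inc_{X,p,p_0}\bigl([w]_{A_{p/p_0}}\bigr) \nrm{f}_{L^p(w;X)}, \qquad f \in \Sigma(\RR^d;X),
\end{equation*}
for all $p > p_0$ and $w \in A_{p/p_0}$.

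Finally, I would invoke Lemma \ref{lem:densityextension} (whose hypothesis \eqref{eq:assTlem} is exactly \eqref{eq:assTcor}) to promote this estimate on simple functions to a bounded extension on all of $L^p(w;X)$, with the norm estimate \eqref{eqn:extn-norm-est}. The only point requiring any thought is the slice verification: one must check that $f(\cdot,\omega)$ is a scalar simple function so that the scalar hypothesis on $T$ is directly applicable, and that the pair extrapolation can be fed signed-to-unsigned via absolute values (this is harmless since $\widetilde{T}$ only appears inside a norm). Otherwise the corollary is essentially a packaging of Theorem \ref{thm:pair-extrap-p} plus Lemma \ref{lem:densityextension}, with no real obstacle.
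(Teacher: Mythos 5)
Your proof is correct and takes essentially the same approach as the paper: apply Theorem \ref{thm:pair-extrap-p} to the family of pairs $\bigl(\abs{\widetilde{T}f},\abs{f}\bigr)$ over $f\in\Sigma(\RR^d;X)$, verifying the slice-wise scalar hypothesis, and then invoke Lemma \ref{lem:densityextension} to extend from simple functions. In fact you write the pair in the correct order for \eqref{eqn:extrap-assn}, whereas the paper's displayed definition of $\mc{F}_T$ lists the components reversed --- evidently a typo, since the conclusion the paper draws matches yours.
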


\begin{proof}
  Applying Theorem \ref{thm:pair-extrap-p} with
  \begin{equation}\label{eqn:Ft}
    \mc{F}_T = \cbrace{(|f|,|\widetilde{T}f|) : f\in \Sigma(\RR^d;X)}
  \end{equation}
  yields $\|\widetilde{T} f\|_{L^p(w;X)}\leq \inc_{X,p,p_0} ( [w]_{A_{p/p_0}}) \|f\|_{L^p(w;X)}$ for all $w\in A_{p/p_0}$ and all $f\in \Sigma(\RR^d;X)$.
  Therefore by Lemma \ref{lem:densityextension}, $\widetilde{T}$ extends to a bounded operator on $L^p(w;X)$ which satisfies \eqref{eqn:extn-norm-est}.
\end{proof}

\begin{rmk}
  Theorem \ref{thm:maximalfunctionweighted} plays a central role in the proof of Theorem \ref{thm:pair-extrap-p}, and so it may not be deduced as a consequence of Corollary \ref{cor:op-extrap}, even though this appears possible.
\end{rmk}

\begin{rmk}
  If one omits the condition \eqref{eq:assTcor} in Corollary \ref{cor:op-extrap}, then the proof shows that the estimate
  \begin{equation*}
    \| \widetilde{T}f \|_{L^p(w;X)} \leq \inc_{X,p,p_0}([w]_{A_{p/p_0}}) \|f\|_{L^p(w;X)}, \qquad f\in \Sigma(\RR^d;X)
  \end{equation*}
  still holds for simple functions.
  The condition \eqref{eq:assTcor} was only applied to extend $\widetilde{T}$ to all of $L^p(w;X)$.
  In applications it may be possible to extend $\widetilde{T}$ in some other way.
\end{rmk}

\begin{example}
  Let $X = L^q$ with $q\in [1, \infty)$.
  Then $X^{p_0} = L^{q/p_0}\in \UMD$ if and only if $q\in (p_0, \infty)$.
  If $p_0\geq 1$, this leads to restrictions on the possible values of $q$ to which we can apply the stated extrapolation results.
\end{example}

\begin{rmk}\label{rem:domain}
  In the results above, $\RR^d$ may be replaced by an open subset $\Omega \subset \RR^d$ by standard restriction-extension arguments.
  For example, given a bounded operator $T$ on $L^p(\Omega,w)$, one can define $\overline{T}$ on $L^p(\RR^d,w)$ by $\overline{T} f = E_{\Omega} T (f|_{\Omega})$, where $E_{\Omega}\colon L^p(\Omega)\to  L^p(\RR^d)$ is given by $E_{\Omega} f = f$ on $\Omega$ and $E_{\Omega} f = 0$ on $\RR^d\setminus \Omega$.
  Note that $\|T\| = \|\overline{T}\|$.
  Further extensions to more general metric measure spaces can be made as long as the lattice maximal function is bounded, but this requires further investigation.
\end{rmk}

\section{Fourier multipliers\label{sec:applications}}

The Fourier transform and Fourier multipliers on vector-valued functions are defined similarly to the scalar-valued case.
We use the following normalisation of the Fourier transform:
\begin{equation*}
  \wh{f}(\xi) = \FF f(\xi) := \int_{\RR^d} f(t) e^{-2\pi i t\cdot\xi}\dd t, \qquad f\in L^1(\RR^d;X), \, \xi \in \RR^d.
\end{equation*}
Let $\Sch(\RR^d;X)$ denote the space of $X$-valued Schwartz functions and $\Sch'(\RR^d;X): = \calL(\Sch(\RR^d), X)$ the space of $X$-valued tempered distributions.
For $m\in L^\infty(\RR^d;\calL(X,Y))$, define the \emph{Fourier multiplier} $\map{T_m}{\Sch(\RR^d)\otimes X}{\Sch'(\RR^d;X)}$ by
\begin{equation*}
  T_m f := \FF^{-1}(m \wh{f}).
\end{equation*}
For every $p\in (1, \infty)$ and $w\in A_\infty$, the Schwartz functions $\Sch(\RR^d)$ are dense in $L^p(w)$ (see \cite[Ex.\ 9.4.1]{lG09}), and so $\Sch(\RR^d)\otimes X$ is dense in $L^p(w;X)$.
Thus the $L^p$-boundedness of $T_m$ reduces to the estimate
\begin{equation*}
  \|T_mf\|_{L^p(w;Y)} \lesssim \|f\|_{L^p(w;X)}, \qquad f \in \Sch(\RR^d) \otimes X.
\end{equation*}
A major obstacle in vector-valued Fourier analysis is that the Fourier transform is bounded on $L^2(\RR^d;X)$ if and only if $X$ is isomorphic to a Hilbert space, so proving boundedness of Fourier multipliers on $L^2(\RR^d;X)$ is already difficult.
We refer to \cite{HNVW16} for a detailed treatment of vector-valued Fourier multipliers.

We prove various Fourier multiplier theorems for the real line, which may be transferred to the torus via the following result, which will also be applied to the variational Carleson operator in Section \ref{sec:vcarl}.

\begin{prop}[Transference]\label{prop:transference}
  Let $p\in(1,\infty)$.
  Let $w\in L^1_{\rm loc}(\RR^d)$ be
  $\ZZ^d$-periodic, and let $\overline{w}$ be the associated weight on $\TT^d$.
  Let $m \in L^\infty(\RR^d;\mc{L}(X,Y))$, and suppose every point of $\ZZ^d$ is a Lebesgue point of the function $m(\cdot)x$ for all $x \in X$.
  If $\map{T_m}{L^p(w;X)}{L^p(w;Y)}$ is bounded, then $\map{T_{m|_{\ZZ^d}}}{L^p(\overline{w};X)}{ L^p(\overline{w};Y)}$ is bounded with $\|T_{m|_{\ZZ^d}}\| \leq \|T_m\|$.
\end{prop}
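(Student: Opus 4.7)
The plan is a De~Leeuw-type transference argument. By density of trigonometric polynomials in $L^p(\overline{w};X)$ (noting that $\overline{w} \in L^1(\TT^d)$ since $w \in L^1_{\loc}(\RR^d)$ is $\ZZ^d$-periodic), it suffices to prove the estimate for $f(x) = \sum_{k \in \Lambda} c_k e^{2\pi i k \cdot x}$ with $\Lambda \subset \ZZ^d$ finite and $c_k \in X$. Fix $\varphi \in \Sch(\RR^d)$ with $\wh\varphi \in C_c^\infty(\RR^d)$ supported in $B(0,1/4)$, and for $\epsilon \in (0,1)$ define the Schwartz functions
\begin{equation*}
g_\epsilon(x) := \varphi(\epsilon x)\, F(x), \qquad h_\epsilon(x) := \varphi(\epsilon x)\, (T_{m|_{\ZZ^d}} f)(x),
\end{equation*}
where $F$ is the $\ZZ^d$-periodic extension of $f$. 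Then $\wh{g_\epsilon}(\xi) = \sum_{k \in \Lambda} c_k\, \epsilon^{-d}\, \wh\varphi(\epsilon^{-1}(\xi - k))$ has pairwise disjoint supports contained in $\bigcup_k B(k, \epsilon/4)$.

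Using the $\ZZ^d$-periodicity of both $w$ and $\nrm{F}_X^p w$, we have
\begin{equation*}
\epsilon^d\, \nrm{g_\epsilon}_{L^p(w;X)}^p = \int_{[0,1]^d} \nrm{F(x)}_X^p\, \overline{w}(x)\, \epsilon^d \sum_{n \in \ZZ^d} |\varphi(\epsilon(x+n))|^p\, dx,
\end{equation*}
and since $\varphi \in \Sch$ the Riemann sum converges uniformly on $[0,1]^d$ to $\nrm{\varphi}_{L^p(\RR^d)}^p$ as $\epsilon \to 0^+$. The same computation for $h_\epsilon$ yields
\begin{equation*}
\epsilon^d\, \nrm{g_\epsilon}_{L^p(w;X)}^p \longrightarrow \nrm{\varphi}_{L^p}^p\, \nrm{f}_{L^p(\overline{w};X)}^p, \qquad \epsilon^d\, \nrm{h_\epsilon}_{L^p(w;Y)}^p \longrightarrow \nrm{\varphi}_{L^p}^p\, \nrm{T_{m|_{\ZZ^d}}f}_{L^p(\overline{w};Y)}^p.
\end{equation*}

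A direct computation after the substitution $\xi = k + \epsilon\eta$ gives
\begin{equation*}
T_m g_\epsilon(x) - h_\epsilon(x) = \sum_{k \in \Lambda} e^{2\pi i k \cdot x}\, v_{\epsilon,k}(\epsilon x), \qquad v_{\epsilon,k}(y) := \int_{B(0,1/4)} \bigl(m(k+\epsilon\eta) - m(k)\bigr)\, c_k\, \wh\varphi(\eta)\, e^{2\pi i y \cdot \eta}\, d\eta.
\end{equation*}
The Lebesgue point assumption on $m(\cdot) c_k$ at $k$ gives $\nrm{v_{\epsilon,k}}_{L^\infty(\RR^d;Y)} \to 0$, and by Plancherel together with dominated convergence $\nrm{v_{\epsilon,k}}_{L^2(\RR^d;Y)} \to 0$. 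Interpolating via $\nrm{v_{\epsilon,k}}_{L^p}^p \leq \nrm{v_{\epsilon,k}}_\infty^{p-2}\, \nrm{v_{\epsilon,k}}_{L^2}^2$ one obtains $\nrm{v_{\epsilon,k}}_{L^p(\RR^d;Y)} \to 0$ for $p \geq 2$, and the same periodization/Riemann sum argument applied to $\nrm{v_{\epsilon,k}(\epsilon\cdot)}_Y^p$ then yields $\epsilon^d\, \nrm{T_m g_\epsilon - h_\epsilon}_{L^p(w;Y)}^p \to 0$ (the range $p \in (1,2)$ can be handled via a duality argument applied to the adjoint multiplier). Combining with $\nrm{T_m g_\epsilon}_{L^p(w;Y)} \leq \nrm{T_m}\, \nrm{g_\epsilon}_{L^p(w;X)}$, multiplying by $\epsilon^{d/p}$, and letting $\epsilon \to 0^+$ proves the desired inequality after cancelling the common factor $\nrm{\varphi}_{L^p}$.

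The main obstacle is the error estimate: since $w$ is only locally integrable and may grow along $\RR^d$, the pointwise smallness $\nrm{v_{\epsilon,k}}_\infty \to 0$ is not by itself enough to conclude $\nrm{v_{\epsilon,k}(\epsilon\cdot)}_{L^p(w;Y)} = o(\epsilon^{-d/p})$, and one must carefully combine $L^\infty$ and $L^2$ information with the periodic structure of $w$ (via the Riemann sum comparison) to absorb the growth of the weight against the decay of $v_{\epsilon,k}$.
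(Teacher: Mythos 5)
The paper's own proof is a one-line citation to \cite{HNVW16}, so your self-contained de~Leeuw transference argument is welcome; the setup (density of trigonometric polynomials in $L^p(\overline w;X)$, the functions $g_\epsilon = \varphi(\epsilon\cdot)F$ and $h_\epsilon = \varphi(\epsilon\cdot)T_{m|_{\ZZ^d}}f$, the periodization/Riemann-sum limits for $\epsilon^d\nrm{g_\epsilon}^p_{L^p(w;X)}$ and $\epsilon^d\nrm{h_\epsilon}^p_{L^p(w;Y)}$, and the explicit form $T_mg_\epsilon - h_\epsilon = \sum_k e^{2\pi i k\cdot}v_{\epsilon,k}(\epsilon\cdot)$) are all correct. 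However, the error estimate contains a genuine gap. You invoke Plancherel to conclude $\nrm{v_{\epsilon,k}}_{L^2(\RR^d;Y)}\to 0$, but Plancherel for $Y$-valued functions holds only when $Y$ is (isomorphic to) a Hilbert space, and no such assumption is made on $Y$ here. Without it there is no $L^2$ (nor any $L^r$, $r<\infty$) control on $v_{\epsilon,k}$: since $m$ is merely $L^\infty$, $v_{\epsilon,k}$ is a bounded band-limited $Y$-valued function with \emph{no} quantitative decay and need not belong to $L^p(\RR^d;Y)$ at all, so the crude bound $\nrm{v_{\epsilon,k}(\epsilon\cdot)}^p_{L^p(w;Y)}\le \nrm{v_{\epsilon,k}}^p_\infty\int_{\RR^d}w = \infty$ cannot be improved by periodization. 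The paragraph acknowledging this as ``the main obstacle'' does not actually close it, and the ``$p<2$ via duality'' step would in addition require Lebesgue points of $m^*(\cdot)y^*$ at $\ZZ^d$, which is not a hypothesis.

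The standard way to avoid all of this---and, as far as I can tell, the way the cited proof in \cite[Section 5.7a]{HNVW16} proceeds, with the weight inserted harmlessly---is to work with the \emph{duality pairing} rather than with $L^p$-norms of the error. One pairs $T_mg_\epsilon$ against $\psi_\epsilon := \phi_2(\epsilon\cdot)\Psi$ for a $Y^*$-valued trigonometric polynomial $\Psi$ and a second Schwartz cutoff $\phi_2$, and estimates
\begin{equation*}
\Bigl|\int_{\RR^d}\ip{T_mg_\epsilon - h_\epsilon, \psi_\epsilon}\,w\,\dd x\Bigr| \le \nrm{\Psi}_{L^\infty}\Bigl(\sum_k\nrm{v_{\epsilon,k}}_{L^\infty(\RR^d;Y)}\Bigr)\int_{\RR^d}|\phi_2(\epsilon x)|\,w(x)\,\dd x,
\end{equation*}
where now the decaying factor $|\phi_2(\epsilon\cdot)|$ comes from the test function rather than from $v_{\epsilon,k}$, the periodized integral $\epsilon^d\int|\phi_2(\epsilon x)|w(x)\dd x\to\nrm{\phi_2}_{L^1}\overline w(\TT^d)$ is bounded, and the uniform $L^\infty$ smallness of $v_{\epsilon,k}$ (which you correctly derive from the Lebesgue-point hypothesis) is all that is needed. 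Combined with Hölder on $\ip{T_mg_\epsilon,\psi_\epsilon}_w$ and the Riemann-sum limits you already established, this yields $|\ip{T_{m|_{\ZZ^d}}f,\psi}_{\overline w}|\le \nrm{T_m}\nrm{f}_{L^p(\overline w;X)}\nrm{\psi}_{L^{p'}(\overline w;Y^*)}$, and taking the supremum over norming $\psi$ finishes the proof, with no Plancherel or decay of $v_{\epsilon,k}$ required.
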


\begin{proof}
  The unweighted version of this result is proved in \cite[Section 5.7a]{HNVW16}, and the proof generalizes directly to the weighted setting.
\end{proof}

We start with a simple extension of scalar Fourier multiplier theory to certain Banach function spaces.

\begin{thm}\label{thm:Fouriermultipliergeneral}
Let $a\in (\frac{d}{2}, d]$ be an integer.
Assume that $m\in L^\infty(\RR^d)$ and that $m\in C^a(\RR^d\setminus\{0\})$ satisfies
\begin{equation}\label{eq:multipliercond}
\sup_{\substack{R>0\\\abs{\alpha} \leq a}} R^{|\alpha|} \Big(R^{-1}\int_{R\leq |\xi|<2R} |D^{\alpha} m(\xi)|^2 \dd \xi\Big)^{1/2}<\infty.
\end{equation}
Let $X$ be a Banach function space with $X^{p_0}\in \UMD$ for some $p_0> d/a$.
Then for every $p\in (\frac{d}{a}, \infty)$ and $w\in A_{\frac{pa}{d}}$, $T_m$ is bounded on $L^p(w;X)$.
\end{thm}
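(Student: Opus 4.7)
The plan is to reduce to the scalar case via Corollary \ref{cor:op-extrap}. The scalar ingredient is the classical weighted H\"ormander $L^2$-type multiplier theorem (see, e.g.\ \cite[Ch.~IV]{GR85}): under hypothesis \eqref{eq:multipliercond} with $a > d/2$, $T_m$ extends to a bounded linear operator on $L^{p'}(\RR^d,v)$ for all $p' \in (d/a, \infty)$ and all $v \in A_{p'a/d}$, with norm controlled by a non-decreasing function of $[v]_{A_{p'a/d}}$. Since $T_m$ is linear, the subadditivity \eqref{eq:assTcor} is trivial, and for $f \in \Sigma(\RR^d;X)$ the function $\widetilde{T_m}f$ coincides with the standard tensor extension and is therefore automatically well-defined and strongly measurable.

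The subtle point is that Corollary \ref{cor:op-extrap} as stated produces boundedness on $L^{p'}(v;X)$ only for $p' > p_0$ and $v \in A_{p'/p_0}$, whereas the conclusion we seek is for all $p \in (d/a, \infty)$ and $w \in A_{pa/d}$. I therefore plan to apply the corollary not with the given $p_0$ but with an auxiliary parameter $q > d/a$ chosen depending on the pair $(p,w)$. Fix $p \in (d/a,\infty)$ and $w \in A_{pa/d}$. By openness of the Muckenhoupt class (Proposition \ref{prop:muckenhoupt}\eqref{it:mw5}), there exists $\delta > 0$ with $w \in A_{pa/d-\delta}$; setting $q := p/(pa/d-\delta)$ gives $w \in A_{p/q}$, with $q \downarrow d/a$ as $\delta \downarrow 0$. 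By openness of the UMD-concavification scale (see Section \ref{subs:UMD}), shrinking $\delta$ if necessary forces $q < p_0$, so that $X^q \in \UMD$; and $q < p$ is automatic for small $\delta$ since $q \to d/a < p$.

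With $q$ fixed, the scalar hypothesis of the corollary at parameter $q$ follows from the first step: for every $p' > q$, the inequality $q > d/a$ gives $A_{p'/q} \subseteq A_{p'a/d}$, so the scalar H\"ormander bound applies on $L^{p'}(v)$ with norm dominated by $\inc_{p',a}([v]_{A_{p'/q}})$ using monotonicity of $[v]_{A_r}$ in $r$ (Proposition \ref{prop:muckenhoupt}\eqref{it:mw3}). Corollary \ref{cor:op-extrap} with $p_0$ replaced by $q$ then yields boundedness of $\widetilde{T_m}$ on $L^p(w;X)$, as desired. There is no substantial obstacle; the only care needed is the simultaneous use of openness on both the Muckenhoupt and concavification sides to guarantee that a single $q$ meeting all requirements exists.
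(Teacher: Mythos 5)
Your proof is correct and essentially reproduces the paper's argument: apply the scalar weighted H\"ormander multiplier theorem of \cite[Theorem~IV.3.9]{GR85} (with Theorem~\ref{thm:increasingconstant} supplying the monotone dependence on the $A_r$-characteristic, a point you glossed over), and then Corollary~\ref{cor:op-extrap} at an auxiliary parameter $q\in(d/a,\min\{p,p_0\})$ chosen via openness of the Muckenhoupt class (Proposition~\ref{prop:muckenhoupt}\eqref{it:mw5}) so that $w\in A_{p/q}$. The only stylistic differences are that you fix $(p,w)$ before choosing $q$ while the paper fixes $(p,q)$ first, and that the step $q<p_0\Rightarrow X^q\in\UMD$ really uses the downward part of \cite[Theorem~4]{jR86} rather than openness in the strict sense, but these are cosmetic.
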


Condition \eqref{eq:multipliercond} is usually called the \emph{H\"ormander--Mihlin condition}.
It holds in particular if $\sup_{\xi\neq 0} |\xi|^{|\alpha|} |D^{\alpha} m(\xi)|<\infty$.

\begin{proof}
Fix $p>d/a$ and let $q\in (d/a,\min\{p,p_0\})$.
By the proof of \cite[Theorem IV.3.9]{GR85} and Theorem \ref{thm:increasingconstant}, for all $w\in A_{p/q}$ we have
\[\|T_m \|_{\mc{L}(L^p(w))} \leq \inc_{m,a,p,q}([w]_{A_{p/q}}).\]
Therefore, by Corollary \ref{cor:op-extrap}, $T_m$ extends to a bounded linear operator on $L^p(w;X)$ for  all $w\in A_{p/q}(\RR^d)$.

Finally, fix $w\in A_{\frac{pa}{d}}(\RR^d)$. Then $w \in A_{p/q}$ for some $q\in (d/a,\min\{p,p_0\})$ by Proposition \ref{prop:muckenhoupt}\eqref{it:mw5}, and thus the required boundedness result for $T_m$ follows.
\end{proof}

\begin{rmk} \
\begin{enumerate}[(i)]
\item
By \cite[Theorem 4]{jR86}, the assumption on $X$ holds for any $\UMD$ Banach function space if $a=d$.
\item Analogous results in the unweighted case also hold for operator-valued multipliers under Fourier type conditions on the Banach spaces (see \cite{GW03, Hyt04}).
\end{enumerate}
\end{rmk}

As another application of Corollary \ref{cor:op-extrap} we prove a multiplier theorem of Coifman--Rubio de Francia--Semmes type \cite{CRS88} (see \cite{sK14} and \cite{Xu96} for weighted extensions in the scalar case), which extends \cite[Theorem A(i)]{sK14} to the vector-valued setting.
In order to state the result we recall the definition of bounded $s$-variation.
Let $\map{m}{\RR}{\CC}$ and $s \in [1,\infty)$.
For each bounded interval $J = [J_-,J_+] \subset \RR$, we say that $m$ has \emph{bounded $s$-variation on $J$} if
\begin{equation*}
  \nrm{m}_{V^s(J)} := \|m\|_\infty +  [m]_{V^s(J)} <\infty,
\end{equation*}
where $[m]_{V^s(J)}^s  := \sup \sum_{i=1}^N |m(t_{i-1}) - m(t_i)|^s $, with supremum taken over all increasing sequences $J_- = t_0 <  \cdots < t_N = J_+$.
Let $\Delta$ be the standard dyadic partition of $\RR \setminus \{0\}$,
\begin{equation*}
  \Delta = \{\pm[2^{k}, 2^{k+1}) : k\in \ZZ\}.
\end{equation*}
We say that $f$ is of {\em bounded $s$-variation uniformly on dyadic intervals} if
\begin{equation*}
  \sup_{J \in \Delta} \nrm{f|_J}_{V^s(J)}<\infty.
\end{equation*}

To prove the following result one uses \cite[Theorem A(i)]{sK14} and the same argument as in the proof of  Theorem \ref{thm:Fouriermultipliergeneral}.
Results for operator-valued multipliers cannot be proved with this method; these are presented in \cite{ALV2}.

\begin{thm}\label{thm:multintrores}
  Let $s \in [1,2]$, and let $X$ be a Banach function space with $X^s \in \UMD$.
  Then for all $\map{m}{\RR}{\CC}$ of bounded $s$-variation uniformly on dyadic intervals, the Fourier multiplier $T_m$ extends boundedly to $L^p(w;X)$ for all $p > s$ and $w \in A_{p/s}$.
\end{thm}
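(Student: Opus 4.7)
My plan is to mirror the proof of Theorem \ref{thm:Fouriermultipliergeneral}: combine the scalar-valued weighted multiplier theorem of Kr\'ol with the vector-valued rescaled extrapolation from Corollary \ref{cor:op-extrap}, calibrated so that the $\UMD$ hypothesis on $X^s$ matches the rescaling parameter $p_0 = s$.

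First, I would invoke \cite[Theorem A(i)]{sK14}: for $s \in [1,2]$ and $m \colon \RR \to \CC$ of bounded $s$-variation uniformly on dyadic intervals, $T_m$ is bounded on $L^p(w)$ for every $p > s$ and every $w \in A_{p/s}$. Via Theorem \ref{thm:increasingconstant} of Appendix \ref{sec:weight dependence}, I would upgrade the resulting estimate to
\[
\|T_m\|_{\mc{B}(L^p(w))} \leq \inc_{m,s,p}([w]_{A_{p/s}}),
\]
which is exactly the shape required by Corollary \ref{cor:op-extrap}.

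Second, I would apply Corollary \ref{cor:op-extrap} with $p_0 = s$. The assumption $X^s \in \UMD$ is precisely what is needed, and linearity of $T_m$ makes condition \eqref{eq:assTcor} automatic. To verify that $\widetilde{T_m}$ is well-defined and strongly measurable on $\Sigma(\RR;X)$, observe that for a simple function $f = \sum_{j=1}^n \mathbf{1}_{A_j}\, x_j$ (with $A_j \subset \RR$ of finite measure and $x_j \in X$) one has $\widetilde{T_m} f(x) = \sum_{j=1}^n T_m(\mathbf{1}_{A_j})(x)\, x_j$, which is manifestly $X$-valued and strongly measurable in $x$. Corollary \ref{cor:op-extrap} then delivers the desired boundedness of $\widetilde{T_m}$ on $L^p(w;X)$ for all $p > s$ and $w \in A_{p/s}$, and also records the quantitative dependence of the operator norm on $[w]_{A_{p/s}}$.

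There is no genuine obstacle here: the analytic heart is already contained in \cite{sK14} (the scalar estimate) and in the extrapolation apparatus developed in Section \ref{sec:extrapolation}. The only mild point of care is verifying that Kr\'ol's constant can be recorded as a nondecreasing function of $[w]_{A_{p/s}}$, which is precisely what Theorem \ref{thm:increasingconstant} is designed to ensure. Once that calibration is in place, the vector-valued statement follows by a direct transfer.
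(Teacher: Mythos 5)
Your proof is correct and mirrors the paper's intended argument exactly: invoke Kr\'ol's scalar weighted estimate \cite[Theorem A(i)]{sK14}, upgrade to monotone dependence on $[w]_{A_{p/s}}$ via Theorem \ref{thm:increasingconstant}, verify the simple-function measurability needed for Corollary \ref{cor:op-extrap}, and apply that corollary with $p_0 = s$. The only (harmless) difference from the proof template of Theorem \ref{thm:Fouriermultipliergeneral} is that you do not need the intermediate exponent $q$ and the openness of the $A_p$ classes, since here the $\UMD$ hypothesis on $X^s$ matches the rescaling parameter $s$ exactly; your direct application of the corollary is cleaner.
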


By duality one obtains a similar result for $X$ such that $(X^*)^s \in \UMD$.
Precise details are left to the reader.

\section{Vector-valued variational Carleson operators}\label{sec:vcarl}

Let $X$ be a Banach function space.
For $r < \infty$, the $X$-valued \emph{variational Carleson operator} $C_r$ is defined on $f \in \Sch(\RR;X)$ by
\begin{equation*}
  C_r f(t) := \sup_{N \in \NN} \sup_{\xi_0 < \cdots < \xi_N} \has{ \sum_{j=1}^N \abss{ \int_{\xi_{j-1}}^{\xi_j} \wh{f}(\xi) e^{it \xi} \, d \xi }^r }^{1/r} \qquad t \in \RR.
\end{equation*}
For each $t \in \RR$, $C_r f(t)$ is the $r$-variation of the partial inverse Fourier transform
\begin{equation*}
  \eta \mapsto \int_{-\infty}^{\eta} \wh{f}(\xi) e^{it\xi} \dd\xi.
\end{equation*}
This is a strengthening of the more classical Carleson operator
\begin{equation*}
  Cf(t) := \sup_{N < \infty} \abss{ \int_{-\infty}^N \wh{f}(\xi) e^{it\xi} \dd\xi },
\end{equation*}
which formally corresponds to the operator $C_\infty$.
Versions of these operators on the one-dimensional torus $\TT = \RR / \ZZ$ can be easily defined, and we will denote these by $C_r^{\TT}$.

Boundedness of $C_\infty^{\TT}$ on $L^p$ for all $p \in (1,\infty)$ is the celebrated Carleson--Hunt theorem (see for example \cite[Chapter 11]{lG09}); a consequence of this boundedness is the pointwise convergence of Fourier series $f(t) = \sum_{k \in \ZZ} \wh{f}(k) e^{-itk}$ for $f \in L^p(\TT)$ and a.e. $t \in \TT$ (an analogous result holds for Fourier integrals, replacing $\TT$ with $\RR$).
This is a qualitative result: the Fourier series (or integral) of an $L^p$ function is guaranteed to converge pointwise a.e., but no information on the rate of convergence is obtained.
Using the extrapolation result which inspired our Theorem \ref{thm:pair-extrap-p}, Rubio de Francia proved that $C_\infty$ is bounded on $L^p(\RR;X)$ for all $\UMD$ Banach lattices \cite[p. 219]{jR86}. See also \cite[Corollary 3.5]{RRT86} for this result on $\UMD$ Banach spaces with an unconditional basis, and more recently \cite{HL13} on `intermediate' $\UMD$ spaces, including the Schatten classes $\Sch^p$.

The $r$-variation of partial inverse Fourier integrals provides {\em quantitative} information on the rate of convergence of Fourier integrals, which motivates investigation of the  boundedness of $C_r$ on $L^p(\RR;X)$ (of course the same holds for Fourier series).
In the scalar case the following result holds; the unweighted case is in \cite[Theorem 1.1]{OSTTW12}, and the weighted case is in \cite[Theorem 2(ii)]{dPDU16} (see also \cite{DL12} for related estimates).

\begin{thm}
  Suppose $r \in (2,\infty)$.
  Then for all $p \in (r^\prime,\infty)$ and $w \in A_{p/r^\prime}(\RR)$, $C_r$ is bounded on $L^p(w)$ with
  \begin{equation*}
    \nrm{C_r}_{\mc{B}(L^p(w))} \leq \inc_{p,r}([w]_{A_{p/r^\prime}}).
  \end{equation*}
\end{thm}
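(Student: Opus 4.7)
The plan here is essentially a repackaging one: the genuine analysis has already been done in \cite{OSTTW12} and \cite{dPDU16}, and what remains is only to recast their conclusion in the form $\inc_{p,r}([w]_{A_{p/r'}})$ needed for the subsequent vector-valued extrapolation via Theorem \ref{thm:pair-extrap-p} and Corollary \ref{cor:op-extrap}.

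First I would recall the unweighted endpoint: \cite[Theorem 1.1]{OSTTW12} gives $\|C_r f\|_{L^p(\RR)} \lesssim_{p,r} \|f\|_{L^p(\RR)}$ for every $p \in (r',\infty)$, via an outer-measure/time-frequency analysis of the variational Carleson model sum. The weighted refinement is then \cite[Theorem 2(ii)]{dPDU16}, which upgrades this to a quantitative estimate on $L^p(w)$ for every $w \in A_{p/r'}$, with an explicit (but not necessarily monotone) control of $\|C_r\|_{\mc{B}(L^p(w))}$ in terms of $[w]_{A_{p/r'}}$. Together these already deliver the qualitative boundedness asserted in the statement; the only remaining point is the monotone dependence on the Muckenhoupt characteristic.

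For that final step I would invoke the monotonization procedure from Appendix \ref{sec:weight dependence}: any quantitative weighted estimate whose constant is a (possibly complicated) function of $[w]_{A_{p/r'}}$ may be bounded above by a non-decreasing function of this characteristic. Applied to the bound extracted from \cite{dPDU16}, this produces the desired $\inc_{p,r}$ with the required monotonicity. This monotonicity is not a luxury: it is precisely what the extrapolation theorems of Section \ref{sec:extrapolation} ingest through the notation $\inc_{p,p_0}([w]_{A_{p/p_0}})$ in \eqref{eqn:extrap-assn}.

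The main obstacle is really located outside the present excerpt, in the deep time-frequency (or sparse-domination) analysis that gives the scalar $r$-variational Carleson bound in the first place; we do not reprove any of it. From the perspective of this paper the only non-trivial check is that the weight dependence produced by \cite{dPDU16} is of a form to which the appendix's monotonization lemma applies, which is routine. With the theorem in hand in this monotone form, one is then perfectly set up to invoke Corollary \ref{cor:op-extrap} in the next step to obtain vector-valued variational Carleson bounds whenever $X^{r'} \in \UMD$.
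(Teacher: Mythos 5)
Your proposal is correct and matches the paper's approach: the theorem is stated as a citation of \cite[Theorem~1.1]{OSTTW12} (unweighted) and \cite[Theorem~2(ii)]{dPDU16} (weighted), with nondecreasing dependence on $[w]_{A_{p/r'}}$ implicitly justified by Theorem~\ref{thm:increasingconstant}. You make the monotonization step more explicit than the paper's one-line attribution, but the substance is identical.
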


This is precisely the kind of estimate that we can extrapolate via Corollary \ref{cor:op-extrap}.
The result is the following theorem, which is new even in the unweighted case.

\begin{thm}\label{thm:carlesonR}
  Suppose $r \in (2,\infty)$, and let $X$ be a Banach function space with $X^{r^\prime} \in \UMD$.
  Then for all $p \in (r^\prime,\infty)$ and $w \in A_{p/q^\prime}$, $C_r$ is bounded on $L^p(w;X)$ with
  \begin{equation*}
    \nrm{C_r}_{\mc{B}(L^p(w;X))} \leq \inc_{X,p,r}([w]_{A_{p/r^\prime}}).
  \end{equation*}
\end{thm}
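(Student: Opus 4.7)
The plan is to deduce Theorem \ref{thm:carlesonR} as a direct application of the operator form of the vector-valued rescaled extrapolation theorem, Corollary \ref{cor:op-extrap}, with $p_0 = r'$. The scalar hypothesis of the corollary is exactly the weighted variational Carleson bound quoted immediately above, and the hypothesis $X^{r'} \in \UMD$ is assumed in the statement. So all that is required is to verify the structural conditions on $C_r$.

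First I would check the condition \eqref{eq:assTcor}. Since $C_r f(t)$ is defined as a pointwise supremum of $\ell^r$-norms of partial inverse Fourier integrals, the triangle inequality in $\ell^r$ makes $C_r$ sublinear and non-negative, and the definition only involves moduli so $C_r(-h) = C_r(h)$. These facts together give
\begin{equation*}
  C_r(f) \le C_r(g) + C_r(f-g), \qquad C_r(g) \le C_r(f) + C_r(f-g),
\end{equation*}
hence $|C_r(f) - C_r(g)| \le C_r(f-g) = |C_r(f-g)|$.

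Next I would address the well-definedness and strong measurability of $\widetilde{C_r}$ on $\Sigma(\RR;X)$. The scalar $C_r$ is bounded on $L^p(\RR,w)$ for any admissible pair $(p,w)$, hence is defined on all scalar simple functions (which lie in every $L^p(w)$). For $f = \sum_k \mathbf{1}_{A_k} \otimes x_k \in \Sigma(\RR;X)$, the slice $f(\cdot,\omega) = \sum_k x_k(\omega)\, \mathbf{1}_{A_k}$ is a scalar simple function depending measurably on $\omega$; continuity of $C_r$ on $L^p(\RR)$ then makes $\omega \mapsto C_r(f(\cdot,\omega)) \in L^p(\RR)$ strongly measurable, giving a jointly measurable representative on $\RR \times \Omega$. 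Viewed as the map $x \mapsto \widetilde{C_r}f(x,\cdot)$ into the Banach function space $X$ over $\Omega$, strong measurability then follows by standard Banach function space arguments.

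With these checks in place, Corollary \ref{cor:op-extrap} directly delivers the desired bound $\|C_r f\|_{L^p(w;X)} \le \inc_{X,p,r}([w]_{A_{p/r'}}) \|f\|_{L^p(w;X)}$ for all $p \in (r',\infty)$ and $w \in A_{p/r'}$. If the monotone dependence of the scalar constant on $[w]_{A_{p/r'}}$ is not already manifest in the quoted theorem, Theorem \ref{thm:increasingconstant} from the appendix upgrades the admissible weight dependence to a nondecreasing one, matching the $\inc_{p,r}$ format required by Corollary \ref{cor:op-extrap}. The mildly subtle step is the measurability verification, but after reducing the variation supremum defining $C_r$ to a countable one over rational breakpoints (using continuity of partial Fourier integrals, or working directly with the $L^p$-extension), it becomes routine.
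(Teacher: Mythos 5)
Your proposal is correct and is exactly the route the paper intends: the paper deduces Theorem \ref{thm:carlesonR} by simply citing Corollary \ref{cor:op-extrap} with $p_0 = r'$ applied to the scalar weighted bound from \cite[Theorem 2(ii)]{dPDU16}, and your argument fills in the routine verifications (sublinearity and symmetry of $C_r$ giving \eqref{eq:assTcor}, strong measurability of $\widetilde{C_r}$ on $\Sigma(\RR;X)$, and the monotone weight dependence) that the paper leaves implicit. No changes needed.
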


Using the transference result of Proposition \ref{prop:transference}, we can deduce an analogous result for $C_r^{\TT}$.

\begin{cor}\label{cor:carlesonT}
  Suppose $r \in (2,\infty)$, and let $X$ be a Banach function space with $X^{r^\prime} \in \UMD$.
  Then for all $p \in (r^\prime,\infty)$ and $w \in A_{p/q^\prime}$, $C_r$ is bounded on $L^p(w;X)$ with
  \begin{equation*}
    \nrm*{C_r^{\TT}}_{\mc{B}(L^p(w;X))} \leq \inc_{X,p,r}([w]_{A_{p/r^\prime}}).
  \end{equation*}
\end{cor}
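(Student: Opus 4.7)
The plan is to deduce this torus result from Theorem~\ref{thm:carlesonR} via the transference principle of Proposition~\ref{prop:transference}, after first linearising $C_r$. For each finite increasing tuple $V = (\xi_0 < \cdots < \xi_N)$ of real numbers, the linearisation $L_V f := (T_{\mathbf{1}_{(\xi_{j-1},\xi_j]}} f)_{j=1}^N$ is a genuine Fourier multiplier from $L^p(w;X)$ into $L^p(w;Y_N)$, where $Y_N$ is the Banach function space $X(\ell^r_N)$ equipped with the lattice norm $\|(g_j)\|_{Y_N} := \|(\sum_j |g_j|^r)^{1/r}\|_X$. Its symbol is the piecewise constant operator-valued function $m_V(\xi) = (\mathbf{1}_{(\xi_{j-1},\xi_j]}(\xi))_{j=1}^N \in \mc{L}(X,Y_N)$. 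Since $\|L_V f(t)\|_{Y_N} \leq C_r f(t)$ pointwise in $t$, Theorem~\ref{thm:carlesonR} yields the uniform bound $\|L_V\|_{\mc{B}(L^p(w;X),\, L^p(w; Y_N))} \leq \inc_{X,p,r}([w]_{A_{p/r'}})$ for all $V$ and $N$.

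When $V \in \ZZ^{N+1}$, the symbol $m_V$ has discontinuities exactly at integers, so Proposition~\ref{prop:transference} does not apply verbatim. I would remedy this by a half-integer shift: replace $V$ with $V' := V + (\tfrac12, \ldots, \tfrac12)$, so that $m_{V'}$ is continuous at every integer (its jumps now sit at half-integers). A direct computation shows that for $k \in \ZZ$, $\mathbf{1}_{(\xi_{j-1}+1/2,\, \xi_j+1/2]}(k) = \mathbf{1}_{(\xi_{j-1},\xi_j] \cap \ZZ}(k)$, so $m_{V'}|_{\ZZ}$ implements precisely the partial Fourier sums on $\TT$ over the integer intervals $(\xi_{j-1}, \xi_j] \cap \ZZ$. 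Hence Proposition~\ref{prop:transference} yields $\|L_V^{\TT}\|_{\mc{B}(L^p(\bar w; X),\, L^p(\bar w; Y_N))} \leq \inc_{X,p,r}([w]_{A_{p/r'}})$ uniformly over integer partitions $V$, where $L_V^{\TT}$ denotes the analogous partial-sum tuple operator on $\TT$.

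The main obstacle is that these uniform per-partition bounds do not, by themselves, control the sublinear operator $C_r^{\TT} f(t) = \sup_V \|L_V^{\TT} f(t)\|_{Y_N}$. To push the supremum through the transference, I would instead run the proof of Proposition~\ref{prop:transference}---based on periodisation against Fejér kernels and identification of the limit via Lebesgue regularity at $\ZZ$---directly on the sublinear operator $C_r$, rather than on each $L_V$ separately. This is permissible because $C_r$ is sublinear and translation-invariant; the Lebesgue regularity hypothesis needs only to be verified for each half-integer-shifted partition $V'$, as above; and by a monotone convergence argument one may restrict the supremum defining $C_r^{\TT}$ to a countable dense family of half-integer partitions, whose restriction to $\ZZ$ exhausts all integer partitions of interest. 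Carrying out this route transfers the Euclidean bound of Theorem~\ref{thm:carlesonR} directly to the claimed estimate for $C_r^{\TT}$ with no loss in the constant.
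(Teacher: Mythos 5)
Your linearisation step is sound and agrees in spirit with the paper: passing to the per-partition multiplier $L_V$ valued in $X(\ell^r)$, using the pointwise domination $\|L_V f(t)\|_{X(\ell^r)} \leq C_r f(t)$ to inherit the Euclidean bound from Theorem~\ref{thm:carlesonR}, and shifting by $\tfrac12$ so that the symbol is continuous at each integer, is exactly the right setup and indeed reproduces the paper's choice of intervals $(n_j-\tfrac12,\,n_{j+1}-\tfrac12)$. You also correctly identified the genuine difficulty: uniform bounds on each linearised $L_V^{\TT}$ do not control the supremum over partitions that defines $C_r^{\TT}$.

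The gap is in your proposed fix. Proposition~\ref{prop:transference} is a statement about \emph{linear} Fourier multipliers $T_m$ with $m\in L^\infty(\RR^d;\mathcal{L}(X,Y))$; its proof (periodisation against Fej\'er kernels and passage to the limit via the Lebesgue-point hypothesis) leans on linearity, and there is no statement in the paper of a transference principle for sublinear operators. Saying ``run the proof directly on $C_r$ because it is sublinear and translation-invariant'' is not a proof -- you would need to formulate and prove a nonlinear transference theorem, which is a nontrivial project and is not what the Lebesgue-regularity-at-$\ZZ$ hypothesis was designed for. The paper avoids this entirely by remaining in the linear framework: it first truncates, restricting to partitions $\mathbf{n}=(n_1\leq\cdots\leq n_N)$ with entries in $\{-M,\dots,M\}$, so the supremum is over a \emph{finite} index set. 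That finite supremum is then absorbed into the target lattice by taking $Y = X\bigl(\ell^\infty_{M,N}(\ell^{r}_{N-1})\bigr)$, with symbol
\[
m(t)x(\mathbf{n},j) \;=\; \ind_{(n_j-\frac12,\,n_{j+1}-\frac12)}(t)\,x ,
\]
which is a bona fide $\mathcal{L}(X,Y)$-valued multiplier satisfying the Lebesgue-point condition. Because the supremum in $C_r$ dominates the $\ell^\infty(\ell^r)$-norm of $T_m f(t)$, one gets $\|T_m\|\leq\|C_r\|_{\mathcal{B}(L^p(\overline{w};X))}$, so Theorem~\ref{thm:carlesonR} and the \emph{linear} Proposition~\ref{prop:transference} give a bound on $T_{m|_{\ZZ}}$ uniform in $M,N$. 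Two applications of monotone convergence (in $M$ and in $N$) then recover the full $C_r^{\TT}$. The key idea you were missing is the $\ell^\infty$ factor encoding the truncated supremum inside the Banach function space, which keeps the whole argument within the scope of the linear transference result.
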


\begin{proof}
  Fix $N,M \in \NN$, and let $\overline{w}$ be the $\ZZ$-periodic extension of $w$ to $\RR$.
  Let
  \begin{equation*}
    \ell^\infty_{M,N} := \ell^\infty(\{-M,\ldots,M\}^N) \quad \text{and} \quad \ell^r_{N-1} := \ell^r(\{1,\ldots,N-1\}).
  \end{equation*}
  Define a bounded operator-valued function
  \begin{equation*}
    m \in L^\infty\ha*{\RR;\mc{L}\ha*{X,X(\ell^\infty_{M,N}(\ell^r_{N-1}))}}
  \end{equation*}
  as follows: for $t \in \RR$, $x \in X$, $\mb{n} = (n_1,\ldots,n_N) \in \{-M,\ldots,M\}^{N}$, and $1 \leq j \leq N-1$, define
  \begin{equation*}
    m(t)x(\mb{n},j) =\begin{cases}
      \ind_{{(n_{j}-\frac{1}{2}, n_{j+1} - \frac{1}{2})}}(t)x &\text{if } n_1\leq\cdots\leq n_N, \\
      0 &\text{otherwise.}
    \end{cases}
  \end{equation*}
  By combining Theorem \ref{thm:carlesonR} and Proposition \ref{prop:transference} we obtain
  \begin{equation*}
    \nrm{T_{m|_{\ZZ}}} \leq \nrm{T_{m}} \leq \nrm{C_r}_{\mc{B}(L^p(\overline{w};X))} \leq \inc_{X,p,r}([\overline{w}]_{A_{p/r^\prime}}) = \inc_{X,p,r}([w]_{A_{p/r^\prime}}),
  \end{equation*}
  which implies for $f \in L^p(w;X)$ that
  \begin{equation*}
    \nrms{\sup_{-M \leq n_1 \leq\cdots\leq n_N \leq M} \has{\sum_{j=1}^{N-1}\abss{\sum_{k=n_{j}}^{n_{j+1}-1}\hat{f}(k)\ee^{itk}}^r}^{1/r}}_{L^p(w;X)} \leq \inc_{X,p,r}([w]_{A_{\frac{p}{r^\prime}}})\nrm{f}_{L^p(w;X)}
  \end{equation*}
  with $\inc_{X,p,r}$ independent of $M$ and $N$.
  Two applications of the monotone convergence theorem yields the desired result.
\end{proof}

\section{Estimates of Littlewood--Paley--Rubio de Francia type\label{sec:LPR}}

Recall the discussion of the operators $S_I$ and $\mc{S}_{\mc{I},q}$ from the introduction.
In this section we apply Corollary \ref{cor:op-extrap} to the operators $\mc{S}_{\mc{I},q}$.
First we consider the operator $\mc{S}_{\Delta,2}$, where $\Delta := \{ \pm[2^k, 2^{k+1}), k \in \ZZ\}$ is the standard dyadic partition of $\RR \setminus \{0\}$.
Corollary \ref{cor:op-extrap} yields a direct proof of the classical Littlewood--Paley estimate in UMD Banach function spaces.

\begin{prop}\label{prop:LP-UMD}
	Let $X$ be a $\UMD$ Banach function space, $p \in (1,\infty)$, and $w \in A_p$.
	Then for all $f\in L^p(w;X)$,
	\begin{equation*}
	\inc_{X,p}([w]_{A_p})^{-1}
 \nrm{f}_{L^p(w;X)}\leq \nrm{\mc{S}_{\Delta,2}(f)}_{L^p(w;X)}\leq \inc_{X,p}([w]_{A_p})
\nrm{f}_{L^p(w;X)}.
	\end{equation*}
      \end{prop}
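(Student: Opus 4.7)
The plan is to reduce everything to the classical (scalar) weighted Littlewood--Paley inequality via the extrapolation machinery of Section \ref{sec:extrapolation}, applied at $p_0 = 1$. Since $X$ is $\UMD$ we have $X^{p_0} = X \in \UMD$, so both Corollary \ref{cor:op-extrap} and Theorem \ref{thm:pair-extrap-p} are available with $p_0 = 1$. The upper bound will follow directly from Corollary \ref{cor:op-extrap}, while the lower bound is cleanest to obtain by applying Theorem \ref{thm:pair-extrap-p} to the reversed pair.

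The scalar input I would invoke is Kurtz's weighted Littlewood--Paley inequality: for every $p \in (1,\infty)$ and $w \in A_p$,
\begin{equation*}
  \inc_{p}([w]_{A_p})^{-1} \|f\|_{L^p(w)} \leq \|\mc{S}_{\Delta,2}(f)\|_{L^p(w)} \leq \inc_{p}([w]_{A_p}) \|f\|_{L^p(w)}.
\end{equation*}
The monotone dependence on $[w]_{A_p}$ required by the extrapolation theorems can be extracted from the standard quantitative estimates via the procedure of Appendix \ref{sec:weight dependence}.

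For the upper bound, take $T = \mc{S}_{\Delta,2}$. The sublinearity condition \eqref{eq:assTcor} is satisfied because $\mc{S}_{\Delta,2}$ is positively sublinear and $|S_I h| = |S_I(-h)|$, so that $|T(f)-T(g)| \leq T(f-g) = |T(f-g)|$. Since each $S_I$ is a scalar Fourier multiplier, for $f \in \Sigma(\RR;X)$ the identity $\widetilde{T}f(x,\omega) = \mc{S}_{\Delta,2}(f(\cdot,\omega))(x)$ reduces well-definedness and strong measurability to the case $f = \mathbf{1}_A \otimes x$, which is immediate. Combined with the scalar estimate above, Corollary \ref{cor:op-extrap} yields the upper bound.

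For the lower bound, apply Theorem \ref{thm:pair-extrap-p} to the family
\begin{equation*}
  \mc{F} := \bigl\{ (|f|,\, \mc{S}_{\Delta,2}(f)) : f \in \Sigma(\RR;X) \bigr\} \subset L^0_+(\RR;X) \times L^0_+(\RR;X).
\end{equation*}
For each fixed $\omega$, $f(\cdot,\omega)$ is a scalar simple function and $\mc{S}_{\Delta,2}(f)(\cdot,\omega) = \mc{S}_{\Delta,2}(f(\cdot,\omega))$, so the pointwise-in-$\omega$ hypothesis \eqref{eqn:extrap-assn} is delivered by the scalar lower Littlewood--Paley inequality. The conclusion gives the lower bound on simple functions, and one then extends to all of $L^p(w;X)$ by the already-established upper bound for $\mc{S}_{\Delta,2}$ combined with Lemma \ref{lem:densityextension}.

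The main obstacle is bookkeeping rather than mathematics: verify the sublinearity and measurability of $\widetilde{T}$, ensure the scalar weighted estimates come with the monotone dependence on $[w]_{A_p}$ needed by the extrapolation theorems, and be careful when passing from simple functions to general elements of $L^p(w;X)$ in the lower bound (this requires the upper bound to already hold on $L^p(w;X)$, which dictates the order of the two arguments). Once these bookkeeping items are handled the extrapolation theorems do all the real work.
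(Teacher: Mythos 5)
Your proof is correct and follows essentially the same route as the paper: the paper invokes Kurtz's scalar weighted Littlewood--Paley theorem (with monotonicity in $[w]_{A_p}$ supplied by Theorem~\ref{thm:increasingconstant}), obtains the upper bound from Corollary~\ref{cor:op-extrap}, and remarks that the lower bound follows either by duality or by Theorem~\ref{thm:pair-extrap-p} applied to the reversed pair, which is precisely the option you chose and elaborated.
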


\begin{proof}
  In the scalar case the result was obtained in \cite[Theorem 1]{dK80}, using Theorem \ref{thm:increasingconstant} for the monotonicity in $[w]_{A_p}$.
  Therefore the estimate
  \begin{equation*}
    \nrm{\mc{S}_{\Delta,2}(f)}_{L^p(w;X)}\leq \inc_{X,p}([w]_{A_p}) \nrm{f}_{L^p(w;X)}
  \end{equation*}
  follows from Corollary \ref{cor:op-extrap}.
  The converse estimate may be proved using a duality argument or Theorem \ref{thm:pair-extrap-p}.
\end{proof}

\begin{rmk}
  Theorem \ref{prop:LP-UMD} actually holds for all $\UMD$ Banach spaces, and was proved in \cite{Bou86,Zim89} in the unweighted case and in \cite{FHL17} in the weighted case.
  Here the $\ell^2$-sum in $\nrm{\mc{S}_{\Delta,2}(f)}_{L^p(w;X)}$ must be replaced by a suitable Radem\-acher sum.
\end{rmk}

Next we establish weighted Littlewood--Paley--Rubio de Francia estimates for Banach function spaces with $\UMD$ concavifications (Theorem \ref{thm:lpr-main-intro} in the introduction).
The unweighted case with $q=2$ was first proved in \cite{PSX12}, but we do not use this result in our proof.

\begin{thm}\label{thm:LPR-main}
  Suppose that $q \in [2,\infty)$ and let $X$ be a Banach function space with $X^{q'} \in \UMD$.
  Then for all collections $\mc{I}$ of mutually disjoint intervals, all $p > q^\prime$, $w \in A_{p/q^\prime}$, and $f\in L^p(w;X)$,
  \begin{equation*}
    \|\mc{S}_{\mc{I},q} (f)\|_{L^p(w;X)} \leq \inc_{X,p,q}([w]_{A_{p/q^\prime}}) \nrm{f}_{L^p(w;X)}.
  \end{equation*}
\end{thm}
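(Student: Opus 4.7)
The plan is to apply Corollary \ref{cor:op-extrap} to the operator $T := \mc{S}_{\mc{I},q}$ acting on $\Sigma(\RR)$, with $p_0 := q'$. Since $q\geq 2$ gives $p_0 \in (1,2]$, and by hypothesis $X^{p_0} = X^{q'} \in \UMD$, the hypothesis on $X$ in that corollary is met. What remains is to verify the three other inputs: the sublinearity-like condition \eqref{eq:assTcor}; the scalar weighted boundedness of $T$ with the right dependence on the Muckenhoupt characteristic; and well-definedness and strong measurability of $\widetilde{T}f$ for $f\in \Sigma(\RR;X)$.

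The condition \eqref{eq:assTcor} is straightforward: $T$ is sublinear (combine Minkowski in $\ell^q$ with linearity of each $S_I$), positively-valued on $L^0_+(\RR)$, and satisfies $T(-f) = T(f)$; hence $Tf\leq Tg + T(f-g)$ and symmetrically, giving $|Tf-Tg|\leq T(f-g) = |T(f-g)|$. For the measurability hypothesis, writing $f = \sum_{k=1}^N \ind_{A_k} x_k$ with $x_k\in X$ and $A_k\subset \RR$ of finite measure, one has
\begin{equation*}
\widetilde{T}f(x,\omega) = \Bigl(\sum_{I \in \mc{I}} \Bigl|\sum_{k=1}^N x_k(\omega)\, S_I\ind_{A_k}(x)\Bigr|^q\Bigr)^{1/q},
\end{equation*}
which is jointly measurable in $(x,\omega)$ and, for each fixed $x$, pointwise bounded by a finite nonnegative linear combination of the $|x_k|\in X$; thus $\widetilde{T}f$ defines a strongly measurable map $\RR\to X$.

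The remaining and principal input is the scalar weighted estimate
\begin{equation*}
\|\mc{S}_{\mc{I},q} f\|_{L^p(\RR,w)} \leq \inc_{p,q}([w]_{A_{p/q'}}) \|f\|_{L^p(\RR,w)}, \qquad p>q',\; w\in A_{p/q'}.
\end{equation*}
Rubio de Francia's original theorem \cite{jR85} handles the unweighted case, and the weighted version is obtained by adapting his argument to the weighted setting; monotone dependence on $[w]_{A_{p/q'}}$ is then extracted by an application of Theorem \ref{thm:increasingconstant} from the appendix. Granted this scalar bound, Corollary \ref{cor:op-extrap} applied with $p_0 = q'$ delivers the claimed vector-valued estimate. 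The main obstacle is therefore securing the scalar weighted Littlewood--Paley--Rubio de Francia inequality for general $q\in [2,\infty)$ with quantitative control on $[w]_{A_{p/q'}}$; the sublinearity and measurability checks are essentially automatic, and the extrapolation step itself is then just an invocation of Corollary \ref{cor:op-extrap}.
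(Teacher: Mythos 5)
Your approach matches the paper's exactly: apply Corollary \ref{cor:op-extrap} with $p_0 = q'$ to the sublinear nonnegative operator $\mc{S}_{\mc{I},q}$, checking \eqref{eq:assTcor} and strong measurability, and feed in the scalar weighted bound. The one point to tighten is attribution for that scalar input: the weighted estimate for $q=2$ is already in \cite[Theorem 6.1]{jR85} (no adaptation of the unweighted argument is needed, though Theorem \ref{thm:increasingconstant} is used to get monotonicity), and for $q>2$ the paper invokes \cite[Theorem B]{sK14}, which already comes with monotone dependence on $[w]_{A_{p/q'}}$.
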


In the scalar case there is also a weak-type estimate for $p=q'$ and $w\in A_1$.
The strong-type estimate seems to remain an open problem (see \cite[(6.4)]{jR85}).

\begin{proof}
  The scalar case of this result is proved in \cite[Theorem 6.1]{jR85} for $q = 2$, and \cite[Theorem B]{sK14} for $q > 2$.
  Monotonicity in  $[w]_{A_{p/q^\prime}}$ is contained in \cite{sK14} for $q > 2$, and can be deduced from \cite{jR85} combined with Theorem \ref{thm:increasingconstant} when $q=2$.
  Thus the result follows immediately from Corollary \ref{cor:op-extrap}.
\end{proof}

\begin{rmk}\ \label{rmk:banachlattice-HS}
As observed in \cite{PSX12}, Theorem \ref{thm:LPR-main} still holds under the assumption that $X$ is a Banach lattice rather than a Banach function space (see \cite[Theorem 1.b.14]{LT79}).
\end{rmk}

When $q=2$, the estimate in Theorem \ref{thm:LPR-main} can be used to obtain extensions of the Marcinkiewicz multiplier theorem.
This is done in \cite[Theorem 2.3]{HP06}.
For $q>2$ a slight variation will be needed to make this work.
The following estimate, which combines Proposition \ref{prop:LP-UMD} and Theorem \ref{thm:LPR-main}, is a key ingredient in the Fourier multiplier theory developed in \cite{ALV2}.

\begin{thm}\label{thm:LPRmod-lat}
  Suppose $q\in [2, \infty)$ and let $X$ be a Banach function space such that $X^{q'}\in \UMD$.
  Let $\mc{I}$ be a collection of mutually disjoint intervals in $\RR$, and for all $J \in \Delta$ let
  $\mc{I}^J := \{I \in \mc{I} : I \subset J\}$.
  Then for all $p > q'$, all $w \in A_{p/q'}$ and all $f\in L^p(w;X)$,
\begin{equation*}
\Big\|\Big(\sum_{J\in \Delta} |\mc{S}_{\mc{I^J},q}(f)|^2 \Big)^{1/2}\Big\|_{L^p(w;X)} \leq \inc_{X,p,q}([w]_{A_{p/q'}}) \nrm{f}_{L^p(w;X)}.
\end{equation*}
\end{thm}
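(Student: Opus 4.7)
The plan is to combine the two results cited in the statement: I will use Proposition \ref{prop:LP-UMD} to pass from $f$ to the sequence $(S_J f)_{J \in \Delta}$, and apply Theorem \ref{thm:LPR-main} in vector-valued form to the Banach function space $X(\ell^2(\Delta))$. The pivotal observation is that $I \subset J$ forces $S_I f = S_I S_J f$, and combined with $\mc{I}^J \subset \mc{I}$ this yields the pointwise domination
\[
\mc{S}_{\mc{I}^J, q}(f) = \mc{S}_{\mc{I}^J, q}(S_J f) \leq \mc{S}_{\mc{I}, q}(S_J f).
\]

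For the main case $q \in (2, \infty)$, I would view $X(\ell^2(\Delta))$ as a Banach function space over $\Omega \times \Delta$ and identify
\[
(X(\ell^2(\Delta)))^{q'} = X^{q'}(\ell^{2/q'}(\Delta))
\]
by a direct manipulation of the concavification norm. Since $2/q' > 1$ and $X^{q'} \in \UMD$, this space is UMD by the standard fact (used in the proof of Lemma \ref{lem:RdF-algo}, cf.\ \cite[p.\ 214]{jR86}) that $Y(\ell^r)$ is UMD whenever $Y$ is a UMD Banach function space and $r \in (1,\infty)$. Therefore Theorem \ref{thm:LPR-main} applies to $X(\ell^2)$, and since Fourier projections act componentwise on $\ell^2(\Delta)$-valued functions it gives
\[
\|(\mc{S}_{\mc{I}, q}(g_J))_J\|_{L^p(w; X(\ell^2))} \leq \inc_{X, p, q}([w]_{A_{p/q'}}) \, \|(g_J)_J\|_{L^p(w; X(\ell^2))}.
\]
Setting $g_J := S_J f$, combining with the pointwise domination above, and invoking Proposition \ref{prop:LP-UMD} for $X$ (which is UMD since $X^{q'} \in \UMD$ and $q' > 1$) to bound $\|(S_J f)_J\|_{L^p(w; X(\ell^2))}$ by $\|f\|_{L^p(w;X)}$ finishes this case.

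The case $q = 2$ escapes this scheme because $(X(\ell^2))^2 = X^2(\ell^1)$ and $\ell^1$ is not UMD. Fortunately it is completely trivial: expanding
\[
\sum_{J \in \Delta} |\mc{S}_{\mc{I}^J, 2}(f)|^2 = \sum_{I \in \tilde{\mc{I}}} |S_I f|^2 = |\mc{S}_{\tilde{\mc{I}}, 2}(f)|^2,
\]
where $\tilde{\mc{I}} := \bigsqcup_J \mc{I}^J$ is a single collection of mutually disjoint intervals (disjointness coming from $\Delta$ partitioning $\RR \setminus \{0\}$), reduces the statement to a single application of Theorem \ref{thm:LPR-main} to $\tilde{\mc{I}}$.

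The main technical step will be the concavification identity $(X(\ell^2))^{q'} = X^{q'}(\ell^{2/q'})$, a short but slightly delicate calculation with Banach function space definitions on product measure spaces; once it is in place, the rest is a clean assembly of the two cited vector-valued estimates. Tracking the dependence on the Muckenhoupt characteristic is routine since $A_{p/q'} \subset A_p$ with $[w]_{A_p} \leq [w]_{A_{p/q'}}$, which absorbs the constant coming from Proposition \ref{prop:LP-UMD}.
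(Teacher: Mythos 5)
Your proof is correct, and it takes a genuinely different route from the paper's.

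The paper's proof reduces the entire statement to a scalar computation: by Corollary \ref{cor:op-extrap} it suffices to take $X=\CC$, by scalar extrapolation (Theorem \ref{thm:extrapolation}) it suffices to take $p=2$, and then the $L^2(w)$ estimate is a short direct calculation combining the scalar Littlewood--Paley--Rubio de Francia theorem with the scalar weighted Littlewood--Paley inequality. Your proof instead stays in the vector-valued world: you apply the already-established vector-valued Theorem \ref{thm:LPR-main} to the larger Banach function space $X(\ell^2(\Delta))$, justified by the concavification identity $(X(\ell^2))^{q'} = X^{q'}(\ell^{2/q'})$ together with the fact (used in Lemma \ref{lem:RdF-algo}) that $Y(\ell^r)$ inherits $\UMD$ from $Y$ for $r \in (1,\infty)$, and then finish with Proposition \ref{prop:LP-UMD} for $X$. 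Both are valid; the paper's extrapolation-to-scalar reduction is slightly shorter because it avoids the concavification computation for $X(\ell^2)$, whereas your argument exhibits the modified estimate as a formal consequence of the unmodified vector-valued LPR inequality, which is arguably more transparent about the logical structure. The pointwise domination $\mc{S}_{\mc{I}^J, q}(f) = \mc{S}_{\mc{I}^J, q}(S_J f) \leq \mc{S}_{\mc{I}, q}(S_J f)$, your handling of the $q=2$ case by merging the $\mc{I}^J$ into one disjoint family, and the weight-characteristic bookkeeping via $[w]_{A_p} \leq [w]_{A_{p/q'}}$ are all correct.
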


\begin{proof}
  If $q = 2$ this follows from Theorem \ref{thm:LPR-main}, so we need only consider $q > 2$.
  By Corollary \ref{cor:op-extrap} it suffices to consider $X = \CC$, and by Theorem \ref{thm:extrapolation} (scalar-valued extrapolation) it suffices to take $p=2$.
  Now estimate
  \begin{align*}
\Big\|\Big(\sum_{J\in \Delta} |\mc{S}_{\mc{I}^J,q}(f)|^2 \Big)^{1/2}\Big\|_{L^2(w)}  &= \has{ \sum_{J \in \Delta} \nrms{ \big( \sum_{I \in \mc{I}^J} |S_I S_J f|^{q} \big)^{1/q} }_{L^2(w)}^2 }^{1/2} \\
    &\leq \inc_{q}([w]_{A_{2/q'}}) \has{ \sum_{J \in \Delta} \nrm{S_J f}_{L^2(w)}^2 }^{1/2} \\
    &\leq \inc_{q}([w]_{A_{2/q'}}) \nrm{ \mc{S}_{\Delta,2}f }_{L^2(w)} \\
    &\leq \inc_{q} ([w]_{A_{2/q'}}) \nrm{f}_{L^2(w)}
  \end{align*}
  using the scalar case of Theorem \ref{thm:LPR-main} (noting that $q^\prime < 2$) in the third line, and Proposition \ref{prop:LP-UMD} in the last line.
\end{proof}

If $X$ is a Hilbert space, then one cannot apply Theorem \ref{thm:LPR-main} with $q=2$.
Instead, the following modification of Theorem \ref{thm:LPR-main} holds.
\begin{prop}\label{prop:HScaseq2}
  Let $X$ be a Hilbert space, and let $\mc{I}$ be a collection of mutually disjoint intervals in $\RR$.
  Then for all $p > 2$, $w \in A_{p/2}$, and $f\in L^p(w;X)$,
  \begin{equation*}
    \Big\|\big( \sum_{I \in \mc{I}} \|S_I f\|^2_X \big)^{1/2}\Big\|_{L^p(w)} \leq \inc_{p}([w]_{A_{p/2}}) \nrm{f}_{L^p(w;X)}.
  \end{equation*}
\end{prop}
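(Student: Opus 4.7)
For a Hilbert space $X\cong\ell^2(J)$ the concavification $X^{q'}=X^2=\ell^1(J)$ fails to be UMD, so neither Theorem~\ref{thm:LPR-main} with $q=2$ nor Corollary~\ref{cor:op-extrap} with $p_0=2$ applies directly. I propose to use the Hilbert structure of $X$ through a double randomization: replace both the $\ell^2$-sum over $\mc{I}$ and the $\ell^2(J)$-norm by Rademacher averages (via Kahane--Khintchine), combine the two randomizations into a single scalar Fourier multiplier applied to a scalar function, and then invoke the scalar case of Theorem~\ref{thm:LPR-main}.

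\textbf{Setting up the double randomization.}
Fix an orthonormal basis $(e_j)_{j\in J}$ of $X$, write $f=(f_j)_{j\in J}$ with $f_j\in L^p(w)$, and introduce independent Rademacher sequences $(\varepsilon_I)_{I\in\mc{I}}$ and $(\varepsilon'_j)_{j\in J}$. Let $T_\varepsilon:=\sum_I\varepsilon_I S_I$ denote the scalar Fourier multiplier, extended componentwise to $X$-valued inputs. \emph{First}, by Kahane--Khintchine in the Hilbert space $X$ (applied pointwise in $t$),
\[
\Bigl(\sum_I\|S_I f(t)\|_X^2\Bigr)^{p/2} \eqsim_p \EE_\varepsilon\|T_\varepsilon f(t)\|_X^p,
\]
so integrating against $w$ gives $\|(\sum_I\|S_I f\|_X^2)^{1/2}\|_{L^p(w)}^p\eqsim_p\EE_\varepsilon\|T_\varepsilon f\|_{L^p(w;X)}^p$. \emph{Second}, using the identity $\|T_\varepsilon f(t)\|_X^2=\sum_j|T_\varepsilon f_j(t)|^2$, a scalar Kahane--Khintchine in the $j$-variable rewrites $\|T_\varepsilon f(t)\|_X^p\eqsim_p\EE_{\varepsilon'}|\sum_j\varepsilon'_j T_\varepsilon f_j(t)|^p$. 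The key linearity identity
\[
\sum_j\varepsilon'_j\,T_\varepsilon f_j \;=\; T_\varepsilon\Bigl(\sum_j\varepsilon'_j f_j\Bigr) \;=:\; T_\varepsilon g_{\varepsilon'}
\]
then collapses this into $\EE_{\varepsilon'}|T_\varepsilon g_{\varepsilon'}(t)|^p$, where $g_{\varepsilon'}:=\sum_j\varepsilon'_j f_j$ is a \emph{scalar} function. A Fubini exchange of $\EE_\varepsilon$, $\EE_{\varepsilon'}$, and the $t$-integration reduces the task to bounding $\EE_{\varepsilon'}\EE_\varepsilon\|T_\varepsilon g_{\varepsilon'}\|_{L^p(w)}^p$.

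\textbf{Scalar step and conclusion.}
\emph{Third}, for each fixed $\varepsilon'$ a scalar Kahane--Khintchine in $\varepsilon$ yields $\EE_\varepsilon|T_\varepsilon g_{\varepsilon'}|^p\eqsim_p(\sum_I|S_I g_{\varepsilon'}|^2)^{p/2}$, so
\[
\EE_\varepsilon\|T_\varepsilon g_{\varepsilon'}\|_{L^p(w)}^p \;\eqsim_p\; \|\mc{S}_{\mc{I},2}g_{\varepsilon'}\|_{L^p(w)}^p \;\leq\; \inc_p([w]_{A_{p/2}})^p\,\|g_{\varepsilon'}\|_{L^p(w)}^p
\]
by the scalar case of Theorem~\ref{thm:LPR-main} (with $X=\CC$, $q=2$). \emph{Fourth}, one further Kahane--Khintchine applied to the scalar Rademacher sum $g_{\varepsilon'}$ gives $\EE_{\varepsilon'}\|g_{\varepsilon'}\|_{L^p(w)}^p\eqsim_p\|f\|_{L^p(w;X)}^p$. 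Chaining all the equivalences, and finally invoking Lemma~\ref{lem:densityextension} to extend from simple functions to all of $L^p(w;X)$, will complete the argument.

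\textbf{Main point.}
The real content of the proof is the linearity identity $\sum_j\varepsilon'_j T_\varepsilon f_j=T_\varepsilon g_{\varepsilon'}$: it lets the two independent Rademacher averages---the one over $\mc{I}$ realizing the $\ell^2$-sum and the one over $J$ realizing the Hilbert norm---collapse into a single scalar Fourier multiplier acting on a scalar function, which is precisely what sidesteps the missing UMD property of $X^2=\ell^1$. The remaining work is bookkeeping: verifying that the four Kahane--Khintchine applications contribute only $p$-dependent constants, so that the full Muckenhoupt dependence $\inc_p([w]_{A_{p/2}})$ comes from the single invocation of scalar Rubio de Francia.
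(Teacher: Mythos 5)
Your proof is correct and takes essentially the same route as the paper. The paper also reduces to $X=\ell^2$ (by essential separability), introduces two independent Rademacher sequences, and exploits the linearity of $\sum_I\varepsilon_I S_I$ to pass through the randomization so that the scalar Littlewood--Paley--Rubio de Francia theorem can be applied; the only cosmetic difference is that the paper packages the second Rademacher sequence as an auxiliary $L^p(\Omega_r)$-valued function $F=\sum_n r_n f_n$ and ``argues pointwise in $\Omega_r$'', whereas you collapse to a genuinely scalar function $g_{\varepsilon'}$ via the linearity identity $\sum_j\varepsilon'_j T_\varepsilon f_j=T_\varepsilon g_{\varepsilon'}$.
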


\begin{proof}
  To prove this it suffices to consider $X = \ell^2$ (by restriction to a separable Hilbert space, see \cite[Theorem 1.1.20]{HNVW16}).
  Now the result follows from Fubini's theorem, the result in the scalar-valued case, and a randomisation argument.

  Let $(\varepsilon_{I})_{I\in \mc{I}}$ and $(r_n)_{n\geq 1}$ be a Rademacher sequences on probability spaces $\Omega_{\varepsilon}$ and $\Omega_{r}$ respectively.
  Then writing $F = \sum_{n\geq 1}r_n f_n\in L^p(w;L^p(\Omega_{r}))$, where $f = (f_n)_{n\geq 1}$, it follows from Fubini's theorem and Khintchine's inequality (see \cite[Corollary 3.3.24]{HNVW16}) that
\begin{equation*}
\Big\|\big( \sum_{I \in \mc{I}} \|S_I f\|^2_{\ell^2} \big)^{1/2}\Big\|_{L^p(w)} \eqsim_p  \Big\| \sum_{I \in \mc{I}} \varepsilon_I S_I F\Big\|_{L^p(\Omega_{r};L^p(w;L^p(\Omega_{\varepsilon})))}.
\end{equation*}
Now we can argue pointwise in $\Omega_{r}$.
By Khintchine's inequality and the scalar case of the Littlewood--Paley--Rubio de Francia theorem \cite[Theorem 6.1]{jR85}, we obtain
\[\Big\|\sum_{I \in \mc{I}} \varepsilon_I S_I F\Big\|_{L^p(w;L^p(\Omega_{\varepsilon}))}  \eqsim_p \Big\|\big( \sum_{I \in \mc{I}} |S_I F|^2_X \big)^{1/2}\Big\|_{L^p(w)}
\leq \inc([w]_{A_{p/2}}) \|F\|_{L^p(w)}.\]
The result now follows by taking $L^p(\Omega_{r})$-norms and applying Khintchine's inequality once more.
\end{proof}

\begin{rmk}
If $X$ is a Hilbert space, $\mc{I}$ a collection of mutually disjoint intervals in $\RR$ and $q>2$, then for all $p > q'$, $w \in A_{p/q'}$ and $f\in L^p(w;X)$, we have
\begin{align*}
	\Big\|\big( \sum_{I \in \mc{I}} \|S_I f\|^q_X \big)^{1/q}\Big\|_{L^p(w)} &\leq \inc_{p,q}([w]_{A_{p/q'}}) \nrm{f}_{L^p(w;X)}\\
\nrms{\Bigl(\sum_{J\in \Delta} \Bigl( \sum_{I \in \mc{I}^J} \nrm{S_I f}_X^q \Bigr)^{2/q} \Bigr)^{1/2}}_{L^p(w)} &\leq \inc_{p,q}([w]_{A_{p/q'}}) \nrm{f}_{L^p(w;X)}.
\end{align*}
These estimates are weaker than Theorem \ref{thm:LPR-main} and Theorem \ref{thm:LPRmod-lat}.
To prove the first estimate it is enough to consider $X = \ell^2$.
In this case \[\Big\|\big( \sum_{I \in \mc{I}} \|S_I f\|^q_{\ell^2} \big)^{1/q}\Big\|_{L^p(w)}  \leq \|\mc{S}_{\mc{I},q} f\|_{L^p(w;\ell^2)}\] by Minkowski's inequality, so the result follows from Theorem \ref{thm:LPR-main}. The second estimate is proved similarly.
\end{rmk}

\appendix

\section{Monotone dependence on Muckenhoupt characteristics}\label{sec:weight dependence}
For scalar-valued extrapolation (Theorem \ref{thm:extrapolation}) one needs an estimate of the form
\begin{equation}\label{eqn:wt-mono}
  \nrm{ f}_{L^{p}(w)} \leq \inc([w]_{A_{p/p_0}}) \nrm{ g}_{L^{p}(w)}
\end{equation}
for all $w \in A_{p/p_0}$, where $\phi\colon [1,\infty) \to [1,\infty)$ is a nondecreasing function independent of $w$; this is often overlooked in the literature.
In applications it is often easily checked that a weighted estimate is dependent on the Muckenhoupt characteristic $[w]_{A_{p/p_0}}$, and not on any other information coming from $w$, see for example \cite{HHH03,sK14}.
However, checking that this dependence is nondecreasing in $[w]_{A_{p/p_0}}$ can be tricky (see for example \cite[Theorem 3.10]{GLV15}).
Moreover, this monotonicity is usually not explicitly stated in the literature.

In this appendix we show that the monotonicity condition in \eqref{eqn:wt-mono} is redundant when working with a set of pairs of nonnegative functions: an estimate depending on $[w]_{A_{p/p_0}}$ with no monotonicity assumption implies the estimate \eqref{eqn:wt-mono}.

\begin{thm}\label{thm:increasingconstant}
  Fix $p_0 \in (0, \infty)$ and $p \in (p_0,\infty)$.
  Let $\mc{F} \subset L_+^0(\RR^d) \times L_+^0(\RR^d)$ and suppose that there exists a function $C\colon[1,\infty) \to [1,\infty)$ such that for all $(f,g) \in \mc{F}$ and $w \in A_{p/p_0}$ we have
  \begin{equation*}
    \nrm{f}_{L^p(w)} \leq C([w]_{A_{p/p_0}}) \nrm{g}_{L^p(w)}.
  \end{equation*}
  Then there exists a nondecreasing function $\inc\colon[1,\infty) \to [1,\infty)$ such that $\inc(t) \leq C(t)$ for all $t \in [1,\infty)$ and such that for all $(f,g) \in \mc{F}$ and $w \in A_{p/p_0}$
  \begin{equation}\label{eq:increasing}
    \nrm{f}_{L^p(w)} \leq \inc([w]_{A_{p/p_0}}) \nrm{g}_{L^p(w)}.
  \end{equation}
\end{thm}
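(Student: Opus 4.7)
The obvious candidate is $\phi(t) := \inf_{s \ge t} C(s)$; this is nondecreasing, dominated by $C$ pointwise, and is in fact the largest nondecreasing function below $C$ (any such function obeys $\phi(t) \le \phi(s) \le C(s)$ for all $s \ge t$, so $\phi(t) \le \inf_{s \ge t} C(s)$). The task is therefore to check that for every $(f,g) \in \mc{F}$ and $w \in A_{p/p_0}$ with $[w]_{A_{p/p_0}} = t_0$ one has $\nrm{f}_{L^p(w)} \le C(s)\,\nrm{g}_{L^p(w)}$ for every $s \ge t_0$; taking the infimum over $s$ then yields the bound $\phi(t_0)\,\nrm{g}_{L^p(w)}$. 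The case $s = t_0$ is just the hypothesis, so all of the content is in $s > t_0$.

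I plan to handle $s > t_0$ by a perturbation argument. Assuming (as we may) that $\nrm{g}_{L^p(w)} \in (0,\infty)$, for each $\epsilon > 0$ I would construct a weight $\wt w \in A_{p/p_0}$ with $[\wt w]_{A_{p/p_0}} = s$ satisfying
\[
\nrm{f}_{L^p(w)} \le (1+\epsilon)\,\nrm{f}_{L^p(\wt w)} \quad\text{and}\quad \nrm{g}_{L^p(\wt w)} \le (1+\epsilon)\,\nrm{g}_{L^p(w)}.
\]
Applying the hypothesis to $\wt w$ then yields $\nrm{f}_{L^p(w)} \le (1+\epsilon)^2 C(s)\,\nrm{g}_{L^p(w)}$, after which $\epsilon \to 0$ concludes the step. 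The weight $\wt w$ is built by modifying $w$ far from where $f$ and $g$ live: since $f,g \in L^p(w)$, outside a sufficiently large ball $B(0,R)$ their $L^p(w)$-mass is negligible, so one may confine the perturbation to a small ball $B(x_0,r_0)$ with $|x_0| \gg R$ without substantially altering either norm. Taking a power weight $b(x) = |x|^\alpha$ with $\alpha$ chosen so that $[b]_{A_{p/p_0}} = s$ (possible for any $s \ge 1$), set $\wt w$ equal to $w$ outside $B(x_0, r_0)$ and equal to a suitable rescaling of $b(\,\cdot - x_0)$ inside.

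The main obstacle is arranging $[\wt w]_{A_{p/p_0}} = s$ exactly. Balls lying entirely inside $B(x_0, r_0)$ inherit characteristic at most $s$ from the rescaled power weight, with $s$ attained by balls centered at $x_0$ by scale invariance; balls disjoint from $B(x_0, r_0)$ contribute at most $t_0 < s$. The genuinely delicate case is balls that straddle $\partial B(x_0, r_0)$, where the contrast between $w$ outside and the rescaled power weight inside can distort both averages in the Muckenhoupt ratio and a priori push the product above $s$. Calibrating the rescaling constant against the typical size of $w$ near $x_0$, together with a judicious choice of $r_0$ and $|x_0|$, is what I expect will require the most care; once this technicality is dealt with, the remainder is just the chaining of inequalities above.
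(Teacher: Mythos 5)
Your overall strategy coincides with the paper's: perturb $w$ on a region where $f$ and $g$ carry little mass so as to raise the $A_{p/p_0}$-characteristic to exactly $s$, apply the hypothesis to the perturbed weight, and let the perturbation shrink. The choice $\phi(t)=\inf_{s\ge t}C(s)$ is fine (the paper instead takes $\phi(t)$ to be the supremum of $\|f\|_{L^p(w)}/\|g\|_{L^p(w)}$ over pairs with $[w]_{A_p}=t$ and proves that \emph{that} function is nondecreasing; the two reductions are interchangeable). The gap is in how you realise $[\wt w]_{A_{p/p_0}}=s$, and this is not a mere technicality that ``requires care'' --- it is the whole content of the argument, and your one-shot paste-in-a-power-weight construction does not provide a mechanism for hitting $s$ on the nose. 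Even if you calibrate the rescaling constant, the straddling balls can push the characteristic strictly above $s$, and you have no continuity parameter to tune back down. What the paper does instead is use a \emph{multiplicative} one-parameter perturbation $w_\sigma=\sigma w$ on half of a small ball (and $=w$ elsewhere), then proves that $\sigma\mapsto[w_\sigma]_{A_p}$ is continuous on $[1,2^ps]$ --- nontrivially, via equicontinuity of the per-ball Muckenhoupt quantities and Arzel\`a--Ascoli --- so that the intermediate value theorem produces a $\sigma$ with $[w_\sigma]_{A_p}=s$ exactly. Your construction would need an analogous continuity-plus-IVT step, and it is harder to run there because you are replacing $w$ rather than scaling it.

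There is a second, quieter gap: your norm comparison $\nrm{g}_{L^p(\wt w)}\le(1+\epsilon)\nrm{g}_{L^p(w)}$ does not follow just from $g$ having small $L^p(w)$-mass on $B(x_0,r_0)$. You are swapping $w$ for a possibly very different weight there, so a priori $\int_{B(x_0,r_0)}|g|^p\,\wt w$ can be large, or even infinite (so that $(f,g)$ is not a usable pair for $\wt w$), unless you also establish $\wt w\lesssim w$ pointwise on $B(x_0,r_0)$; the $A_p$ condition gives no pointwise lower bound on $w$ near an arbitrary $x_0$, so this is genuinely at issue. The paper's choice $w_\sigma=\sigma w$ on $B_0^+$ with $\sigma\le 2^ps$ makes $w\le w_\sigma\le 2^ps\,w$ automatic, which is what makes the norm comparison go through cleanly there. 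If you want to retain your construction, you must additionally (i) replace the one-shot choice of $\alpha$ by a continuously varying parameter and prove continuity of the resulting characteristic so you can invoke IVT, and (ii) enforce $\wt w\lesssim w$ on the perturbation region so that $g\in L^p(\wt w)$ and the norm comparison holds. Both are handled more simply by the paper's multiplicative bump.
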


\begin{proof}
  By rescaling $f$ and $g$ we may take $p_0 =1$. Without loss of generality we may assume that $f,g \in L^p(w)$ for all $(f,g) \in \mc{F}$.
  Define $\inc:[1,\infty) \to [1,\infty)$ by
  \begin{equation*}
    \phi(t) := \sup \cbraces{\frac{\nrm{f}_{L^p(w)}}{\nrm{g}_{L^p(w)}}:(f,g) \in \mc{F}, w \in A_p, [w]_{A_p} =t}.
  \end{equation*}
  Then $\inc(t) \leq C(t)$ for all $t \in [1,\infty)$, and \eqref{eq:increasing} holds.

  We will show that $\inc$ is nondecreasing.
  Let $1 \leq t < s < \infty$ and $\varepsilon >0$.
  Fix $w \in A_p$ with $[w]_{A_p}=t$ and $(f,g) \in \mc{F}$ such that
  \begin{equation*}
    \nrm{f}_{L^p(w)} \geq \bigl( \inc([w]_{A_p})-\varepsilon\bigr)\nrm{g}_{L^p(w)},
  \end{equation*}
  and fix a ball $B_0 \subset \RR^d$ such that
  \begin{equation}\label{eq:B0assumptions}
    \nrm{f\ind_{B_0}}_{L^p(w)} \leq \varepsilon \nrm{g}_{L^p(w)} \quad \text{and} \quad \nrm{g\ind_{B_0}}_{L^p(w)} \leq  \frac{\varepsilon}{2{s}^{\frac1p}} \nrm{g}_{L^p(w)}.
  \end{equation}
  Divide $B_0$ into two sets $B_0^+$ and $B_0^-$ such that $\abs{B_0^+} = \abs{B_0^-} = \abs{B_0}/2$ and $w(x) > w(y)$ for all $x \in B_0^+$ and $y \in B_0^-$.
  For any $\sigma \in [1,\infty)$ we define a weight
  \begin{equation*}
    w_\sigma(x) := \begin{cases}
      \sigma \cdot w(x) &\text{if }x \in B_0^+\\
      w(x) &\text{if }x \in B_0^-,
    \end{cases}
  \end{equation*}
  and for $B \subset \RR^d$ define a function $f_B \colon [1,\infty) \to [1,\infty)$ by
  \begin{equation*}
    f_B(\sigma) := \frac{1}{\abs{B}}\int_B w_\sigma(x) \dd x \, \has{\frac{1}{\abs{B}} \int_B w_\sigma^{-1/(p-1)} \dd x}^{p-1}.
  \end{equation*}
  Then $f_B$ is of the form
  \begin{equation*}
    f_B(\sigma) = (\alpha_0 + \alpha_+ \cdot \sigma)\ha*{\beta_0 +  \beta_+ \cdot \sigma^{-\frac{1}{p-1}} }^{p-1}
  \end{equation*}
  with $\alpha_- ,\alpha_+,\beta_- ,\beta_+$ constants depending on $B$ which satisfy
  \begin{equation*}
    \alpha_- <  \alpha_+, \qquad \beta_- >\beta_+, \qquad(\alpha_-+\alpha_+)(\beta_-+\beta_+)^{p-1} \leq [w]_{A_p}.
  \end{equation*}
  So if we restrict to $[1,2^p s]$ we know that $f_B \in C^1([1,2^p s])$ with norm independent of $B$.

  For each $n \in \NN$ define a function
  \begin{equation*}
    f_n := \sup_{B \in \mc{B}_n} f_B
  \end{equation*}
  on $[1,2^p t]$, where each $\mc{B}_n$ is a finite collection of balls in $\RR^d$, such that $\mc{B}_n \subset \mc{B}_{n+1}$ and $\bigcup_{n=1}^\infty \mc{B}_n$ contains all balls in $\RR^d$ with rational centre and radius.
  Then the sequence $(f_n)_{n=1}^\infty$ is nondecreasing and bounded, so it converges pointwise to some function $f$.
  Restricting to $[1,2^p s]$, we also have that the sequence $(f_n)_{n=1}^\infty$ is equicontinuous, so by the Arzel\`a--Ascoli theorem we know that $f$ is continuous on $[1,2^p s]$.
  By a density argument we get that
  \begin{equation*}
    f(\sigma) = \sup_{\substack{B\subset\RR^d\\B \text{ rational} }} f_B(\sigma) =  \sup_{B\subset \RR^d} f_B(\sigma) = [w_\sigma]_{A_p}.
  \end{equation*}
  Since $f(1) = [w]_{A_p} = t$ and
  \begin{equation*}
    f(2^ps) \geq \frac{1}{\abs{B_0}}\int_{B_0^+} 2^ps w(x) \dd x \, \has{\frac{1}{\abs{B_0}}\int_{B_0^-} w(x)^{-\frac{1}{p-1}}\dd x}^{p-1}  \geq  \frac{f_{B_0}(1)}{2^p}   2^p s \geq s,
  \end{equation*}
  there exists $\sigma \in [1,2^p s]$ such that $s = f(\sigma) = [w_\sigma]_{A_p}$.

  Now by construction and \eqref{eq:B0assumptions} we  have
  \begin{equation*}
    \nrm{g\ind_{B_0}}_{L^p(w_\sigma)} \leq \sigma^{1/p} \nrm{g\ind_{B_0}}_{L^p(\RR^d,w)} \leq \varepsilon\nrm{g}_{L^p(\RR^d,w)}.
  \end{equation*}
  Combining this with \eqref{eq:B0assumptions} and the triangle inequality yields
  \begin{align*}
    \nrm{f}_{L^p(w_s)} &\geq \nrm{f\ind_{B_0^c}}_{L^p(w)}
                         +\nrm{f\ind_{B_0}}_{L^p(w)}-\nrm{f\ind_{B_0}}_{L^p(w)}\\
                       &\geq \nrm{f}_{L^p(w)} - \nrm{f\ind_{B_0}}_{L^p(w)}\\
                       &\geq (\inc(t) - 2\varepsilon)\nrm{g}_{L^p(w)}\\
                       &\geq  (\inc(t) - 2\varepsilon) \has{\nrm{g}_{L^p(w_s)} -\nrm{g\ind_{B_0}}_{L^p(w_s)}}\\
                       &\geq  (\inc(t) - 2\varepsilon)(1 - \varepsilon) \nrm{g}_{L^p(w_s)}.
  \end{align*}
  Thus $\inc(s) \geq (\inc(t) - 2\varepsilon)(1 - \varepsilon)$, and since $\varepsilon>0$ was arbitrary this implies $\inc(s) \geq \inc(t)$, so  $\inc$ is nondecreasing.
\end{proof}

\begin{rmk}
  The proof of Theorem \ref{thm:increasingconstant} can be adapted to allow for $p/p_0 = 1$, in which case we need to deal with the $A_1$-characteristic.
\end{rmk}

Theorem \ref{thm:increasingconstant} implies a result of the same type for vector-valued extrapolation (Theorem \ref{thm:pair-extrap-p}).

\begin{cor}\label{cor:increasingconstantvector}
  Fix $p_0 \in (0, \infty)$, $p \in (p_0,\infty)$. and suppose that $X$ is a Banach function space over a measure space $(\Omega,\mu)$.
  Let $\mc{F} \subset L_+^0(\RR^d;X) \times L_+^0(\RR^d;X)$ and suppose that there exists a function $C\colon[1,\infty) \to [1,\infty)$ such that for all $(f,g) \in \mc{F}$ and $w \in A_{p/p_0}$ we have
  \begin{equation}\label{eq:increasingvector}
    \nrm{f(\cdot,\omega)}_{L^p(w)} \leq C([w]_{A_{p/p_0}}) \nrm{g(\cdot,\omega)}_{L^p(w)}, \qquad \mu\text{-a.e } \omega \in \Omega.
  \end{equation}
  Then there exists an nondecreasing function $\inc\colon[1,\infty) \to [1,\infty)$ such that $\inc(t) \leq C(t)$ for all $t \in [1,\infty)$ and
  \begin{equation*}
    \nrm{f(\cdot,\omega)}_{L^p(w)} \leq \inc([w]_{A_{p/p_0}}) \nrm{g(\cdot,\omega)}_{L^p(w)}, \qquad \mu\text{-a.e } \omega \in \Omega.
  \end{equation*}
\end{cor}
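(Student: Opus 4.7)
The plan is to reduce directly to the scalar case (Theorem \ref{thm:increasingconstant}) by forming the set of all ``slices'' of pairs in $\mc{F}$. The scalar theorem will then produce a single monotone function $\phi$ that works for every such slice, which is precisely what the conclusion requires.

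First, for each $(f,g) \in \mc{F}$, the hypothesis \eqref{eq:increasingvector} provides a $\mu$-null set $N_{f,g} \subset \Omega$ such that for all $\omega \in \Omega \setminus N_{f,g}$ and all $w \in A_{p/p_0}$,
\begin{equation*}
\nrm{f(\cdot,\omega)}_{L^p(w)} \leq C([w]_{A_{p/p_0}}) \nrm{g(\cdot,\omega)}_{L^p(w)}.
\end{equation*}
I would then define a scalar family of pairs
\begin{equation*}
\widetilde{\mc{F}} := \cbrace*{\bigl(f(\cdot,\omega),\, g(\cdot,\omega)\bigr) : (f,g) \in \mc{F}, \, \omega \in \Omega \setminus N_{f,g}} \subset L^0_+(\RR^d) \times L^0_+(\RR^d),
\end{equation*}
noting that $f(\cdot,\omega)$ and $g(\cdot,\omega)$ are non-negative scalar-valued measurable functions since $X$ is a Banach function space over $\Omega$. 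By construction, every pair in $\widetilde{\mc{F}}$ satisfies the scalar estimate with constant $C([w]_{A_{p/p_0}})$ for every $w \in A_{p/p_0}$.

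Next, I would apply Theorem \ref{thm:increasingconstant} to the family $\widetilde{\mc{F}}$ with the same $p$ and $p_0$. This yields a nondecreasing function $\phi\colon [1,\infty) \to [1,\infty)$ with $\phi(t) \leq C(t)$ for all $t \geq 1$, such that
\begin{equation*}
\nrm{\widetilde{f}}_{L^p(w)} \leq \phi([w]_{A_{p/p_0}}) \nrm{\widetilde{g}}_{L^p(w)}
\end{equation*}
for every $(\widetilde{f},\widetilde{g}) \in \widetilde{\mc{F}}$ and every $w \in A_{p/p_0}$. Translating back, for every $(f,g) \in \mc{F}$ and every $\omega \in \Omega \setminus N_{f,g}$ (i.e., $\mu$-a.e.\ $\omega \in \Omega$), this gives precisely the desired estimate for the vector-valued family.

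There is no genuine obstacle here; the only point requiring care is the handling of the null sets, since the hypothesis holds pointwise a.e.\ rather than everywhere. This is harmless because the null set $N_{f,g}$ may be chosen once per pair $(f,g) \in \mc{F}$, so the resulting $\widetilde{\mc{F}}$ is a legitimate subset of $L^0_+(\RR^d) \times L^0_+(\RR^d)$ to which the scalar monotonicity theorem applies. The function $\phi$ produced by Theorem \ref{thm:increasingconstant} depends on $\widetilde{\mc{F}}$ but not on $\omega$, which is exactly the uniformity needed.
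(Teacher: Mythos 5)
Your proposal is correct and is essentially the same as the paper's argument, with a minor repackaging: you apply Theorem \ref{thm:increasingconstant} once to the ``flattened'' family $\widetilde{\mc{F}}$ of all slices, whereas the paper applies the theorem separately to each fixed-$\omega$ slice family $\{(f(\cdot,\omega),g(\cdot,\omega)) : (f,g)\in\mc{F}\}$ and then sets $\inc := \sup_{\omega}\inc_\omega$. These produce the same function, since the $\inc$ constructed in Theorem \ref{thm:increasingconstant} is a supremum of ratios and the supremum over the union family equals the supremum over $\omega$ of the per-$\omega$ suprema. One small caveat (present in both your write-up and the paper's): strictly speaking, the hypothesis gives a null set $N_{f,g,w}$ depending on the weight as well, so passing to a single null set $N_{f,g}$ independent of $w$ requires a countability or continuity argument; this is a routine technicality that both proofs treat implicitly.
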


\begin{proof}
  Fix $\Omega_0$ such that \eqref{eq:increasingvector} holds for all $\omega \in \Omega_0$.
  Using Theorem \ref{thm:increasingconstant} for $\omega \in \Omega_0$, we can find $\inc_\omega\colon[1,\infty) \to [1,\infty)$ such that $\phi_\omega(t) \leq C(t)$ for all $t \in [1,\infty)$ and
  \begin{equation*}
    \nrm{f(\cdot,\omega)}_{L^p(w)} \leq \inc_\omega([w]_{A_{p/p_0}}) \nrm{g(\cdot,\omega)}_{L^p(w)}.
  \end{equation*}
  Setting $\phi(t) := \sup_{\omega \in \Omega_0} \phi_\omega(t) \leq C(t)$ proves the corollary.
\end{proof}

\bibliographystyle{plain}

\bibliography{literature}

\end{document}